\newtheorem{thm}{Theorem}[section]
\newtheorem{lem}[thm]{Lemma}
\newtheorem{prop}[thm]{Proposition}
\theoremstyle{definition}
\newtheorem{defn}[thm]{Definition}
\theoremstyle{remark}
\newtheorem{rem}[thm]{Remark}
\numberwithin{equation}{section}
\begin{document}

\title{\bfseries\textrm{Finite-Time Elimination of Disagreement of Opinion Dynamics via Covert Noise*} \footnotetext{*This research was supported by the Fundamental Research Funds for the Central Universities (2016JBM070), the National Key Basic Research Program of China (973 program) under grant 2014CB845301/2/3 and the National Natural Science Foundation of China under grants No. 11371049, 61673373 and 91427304.}}
%
\author{Wei Su \thanks{School of Science, Beijing Jiaotong University, Beijing 100044, China, {\tt
suwei@amss.ac.cn}}\and Ge Chen \thanks{National Center for Mathematics and Interdisciplinary Sciences \& Key Laboratory of Systems and
Control, Academy of Mathematics and Systems Science, Chinese Academy of Sciences, Beijing 100190,
China, {\tt
chenge@amss.ac.cn}}\and Yongguang Yu \thanks{School of Science, Beijing Jiaotong University, Beijing 100044, China, {\tt
ygyu@bjtu.edu.cn}}
}

%
\date{}%
\maketitle
\begin{abstract}
Eliminating disagreement in a group is usually beneficial to the social stability. In this paper, using the well-known Hegselmann-Krause (HK) model, we design a quite simple strategy that could  resolve the opinion difference of the system in finite time and induce the opinions synchronized to a targeted value. To be specific, we intentionally introduce weak random noise to only one agent to affect its opinion in the evolution and also a leader agent with fixed opinion in a divisive group, then strictly prove that the disagreement is finally eliminated and the opinions get synchronized to the leader's opinion in finite time. Other than that, we calculate the finite stopping time when the opinions synchronously reach the objective opinion value, which could provide further guide for designing more efficient noise intervention strategies.
\end{abstract}

\textbf{Keywords}: Disagreement elimination, finite stopping time, Hegselmann-Krause model, opinion dynamics, multi-agent systems
\section{Introduction}

In recent years, opinion dynamics has attracted more and more interests of researchers from various areas \cite{Castellano2009,Etesami2015,Jia2015}. To quantitatively investigate the evolution of opinions in a society, several agent-based mathematical models of opinion dynamics have been proposed \cite{DeGroot1974,Friedkin1999,Deffuant2000,Krause2000,Hegselmann2002,Hegselmann2005,Blondel2009,Frasca2013,Jia2015}, whose evolution can be determined by the pre-assumed interaction topology of agents or the confidence bound of the agents. These models effectively characterize the complexity of opinion behaviors, and present various agreement or disagreement phenomena.

One of the central issue in the study of social opinions is the consensus or agreement in the opinion dynamics. However, both the practical observations and the analysis of the mathematical opinion dynamics models show that it is very often the opinions could evolve to be disagreement and the group forms a divisive status.
It is well known that strong disagreement may cause social conflict and result in serious consequences in the society \cite{Nicholson1992,Rahim2001}. Hence, it is usually necessary to enhance the opinion consensus or reduce the opinion difference through some intervention measures. Furthermore, it sometimes needs active opinion intervention to achieve some social goals related to, e.g., opening new market for corporations, commercial negotiations, political elections, or social campaigns, etc. \cite{Bernays1928,Norrander2002}, where opinions are expected to be guided to some desired objectives. Therefore, designing simple and efficient intervention strategies to eliminate the disagreement and induce the opinions to a targeted value is of great importance in opinion dynamics.

Usually it is troublesome and costly to search and design an applicable approach that could eliminate the opinion difference of a group. Fortunately, an interesting phenomenon was found with the confidence-based opinion dynamics models that the random noise in some situations could play a positive role in reducing the disagreement of opinions \cite{Mas2010,Pineda2011,Grauwin2012,Carro2013, Pineda2013}. Here the noise could be the opinion fluctuation caused by rumors, misinformation, news or broadcasts, and even agents' free will. Very recently this discovery was rigorously proved for the well-known Hegselmann-Krause (HK) confidence-based model \cite{Su2016}, where each agent has a bounded confidence, and updates its opinion value by just averaging that of its neighbors (agents whose opinion values
lie within its confidence range). The noise-free HK model will reach static status in finite time and the opinions could be consensus sometimes, but divisive more often. In \cite{Su2016}, it is proved that when all agents of the HK model are affected by random noises during the evolution, the opinions will almost surely reach quasi-consensus (a concept of consensus defined for the noisy case) in finite time and a critical value of noise strength is given. Since the noise is quite easy to generate and manipulate, these findings prompt us to design the disagreement elimination strategy for a divisive group by using random noise. However, the synchronized opinions in \cite{Su2016} are proved to cross the whole opinion space infinite times, while on the other hand we may need the opinions could evolve to some targeted value. In \cite{Han2006,Han2016}, a soft control strategy was proposed by adding a special agent, called ``shill'' which will not destroy the dynamics of the original system, to control the collective behavior intentionally. This inspires us to control the opinions to our objective value.

In this paper, we will design a simple opinion intervention strategy for the HK opinion dynamics by intentionally introducing random noise uniformly distributed on an interval to only one targeted agent in a divisive system to affect its opinion values and also a leader agent who sticks to its opinion value in the system to influence the objective opinions by soft control. Here, for the sake of practical convenience, we only consider to inject weak noise which is covert in manipulation. The noise can be the selective information or standpoints spread intentionally. In this scheme, we will first prove that the separated opinions will almost surely reach $\phi$-consensus (a general concept of consensus) in finite time when the driving noise is adequately weak, then prove that in the presence of leader agent, the synchronized opinions will run to the leader's opinion value in finite time. After that, we will approximately calculate the finite stopping time when the opinions reach $\phi$-consensus and the objective value. It will be shown that the finite stopping time is not integrable when the noise is neutral, and integrable when the noise is oriented (integrable means the expectation of the stopping time is finite). This provides us with further guides for designing more effective noise intervention strategies to eliminate the disagreement of a group.

The organization of this paper is as follows: Section \ref{formulation} formulates the model and the noise intervention scheme, while Sections \ref{quasiconsen}, \ref{Softcontrol} and \ref{calstoptime} give the proof of achieving $\phi$-consensus, controlling opinions to the targeted value and the estimation of the finite stopping time respectively. In Section \ref{simulation}, some numerical simulations of the main results are presented, and the concluding remarks are given in Section \ref{Conclusions}.

\section{Preliminaries and Formulation}\label{formulation}
Suppose there are $n$ agents with $\mathcal{V}=\{1,\ldots,n\}$, whose opinion values at time $t$ are denoted by $x(t)$, which takes values in $(-\infty,\infty)^n$,
then the basic HK model admits the following dynamics:
\begin{equation}\label{basicHKmodel}
  x_i(t+1)=\frac{1}{|\mathcal{N}_i(x(t))|}\sum\limits_{j\in\mathcal{N}_i(x(t))}x_j(t),\,\,\,i\in\mathcal{V},
\end{equation}
where
\begin{equation}\label{neigh}
 \mathcal{N}_i( x(t))=\{j\in\mathcal{V}\; \big|\; |x_j(t)-x_i(t)|\leq \epsilon\}
\end{equation}
is the neighbor set of $i$ and $\epsilon>0$ represents the confidence threshold of the agents. $|\cdot|$ can be the cardinal number of a set or the absolute value of a real number accordingly.

For the noise-free HK model (\ref{basicHKmodel}), it has been proved that the opinion will reach static state in finite time, with agreement or disagreement status:
\begin{prop}\label{HKfrag}\cite{Blondel2009}
For every $1\leq i\leq n$ of (\ref{basicHKmodel}), $x_i(t)$ will converge to some $x_i^*$ in finite time, and either $x_i^*=x_j^*$ or $|x_i^*-x_j^*|> \epsilon$ holds for any $i, j$.
\end{prop}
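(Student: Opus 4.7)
The plan is to prove Proposition \ref{HKfrag} in three stages --- order preservation, decoupling along large gaps, and finite-time stabilization within a chain --- followed by the dichotomy on limits.

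First I would show that (\ref{basicHKmodel}) preserves the relative order of opinions. Relabeling, assume $x_1(0)\le\cdots\le x_n(0)$. With the state sorted, each $\mathcal{N}_i(x(t))$ is a contiguous interval $\{\ell_i,\ldots,r_i\}$ of indices, and a standard fact about sliding averages (extending the right endpoint or shrinking from the left cannot decrease the mean of a sorted list) yields $x_i(t+1)\le x_j(t+1)$ whenever $i<j$. Hence the order persists, $x_1(t)$ is non-decreasing, $x_n(t)$ is non-increasing, and the trajectory stays in the compact set $[x_1(0),x_n(0)]^n$.

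Second, I would decouple the system along any gap exceeding $\epsilon$: if $x_{k+1}(t)-x_k(t)>\epsilon$ at some $t$, then every $i\le k<j$ satisfies $x_j(t)-x_i(t)>\epsilon$, and because the maximum of the left block evolves non-increasingly while the minimum of the right block evolves non-decreasingly, the separation is permanent. The dynamics restricts to two independent HK systems, so by induction on $n$ it suffices to analyze a \emph{chain} with $x_{i+1}(t)-x_i(t)\le\epsilon$ for all consecutive $i$.

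Third, within a chain I would establish finite-time stabilization. The neighbor configuration $\bigl(\mathcal{N}_1(x(t)),\ldots,\mathcal{N}_n(x(t))\bigr)$ takes values in a finite set, and between consecutive changes the update is a fixed symmetric row-stochastic averaging map (symmetric because $j\in\mathcal{N}_i\Leftrightarrow i\in\mathcal{N}_j$). This symmetry makes $\sum_i x_i$ invariant on each connected block and renders $V(t)=\sum_i(x_i(t)-\bar{x})^2$ non-increasing and strictly decreasing whenever any neighbor set is non-trivial. Finiteness of the configuration space together with uniform contractivity of each non-trivial averaging map implies that the configuration can change only finitely many times; after the last change, one averaging step collapses every maximal pairwise-close block to its common average, yielding a fixed point in finite time. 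The \textbf{main obstacle} is precisely this last point: ruling out infinite oscillation among configurations and upgrading asymptotic convergence to literal finite-time convergence. The key technical ingredient is a uniform lower bound on the Lyapunov decrement across the finite configuration set, combined with a careful argument that configuration transitions eventually only coarsen the clustering structure once the system is close to equilibrium.

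Finally, to obtain the dichotomy, at a fixed point $x^*$ consider the leftmost agent $a$ with opinion $m=\min_k x_k^*$. Its neighbor set is $\{k:x_k^*\in[m,m+\epsilon]\}$, and the fixed-point equation $m=\frac{1}{|\mathcal{N}_a|}\sum_{k\in\mathcal{N}_a}x_k^*$ together with $x_k^*\ge m$ forces $x_k^*=m$ for every $k\in\mathcal{N}_a$. Hence no agent has opinion in $(m,m+\epsilon]$, so the next cluster (if any) is separated by a strict gap exceeding $\epsilon$. Iterating on the remaining agents yields $x_i^*=x_j^*$ or $|x_i^*-x_j^*|>\epsilon$ for every pair, completing the proof.
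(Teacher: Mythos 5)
Your stages 1, 2 and 4 are sound and standard: order preservation under sorting, permanent decoupling across any gap exceeding $\epsilon$, and the fixed-point dichotomy via the leftmost agent all check out. The genuine gap is in stage 3, which is exactly where the content of the proposition lies. First, your justification of the Lyapunov step is based on a false premise: the HK update matrix has entries $1/|\mathcal{N}_i|$ for $j\in\mathcal{N}_i$, so it is row-stochastic with a symmetric zero pattern but \emph{not} doubly stochastic, and $\sum_i x_i$ is not invariant. For instance, with $\epsilon=1$ and opinions $(0,1,2,2)$ one step gives $(0.5,\,1.25,\,5/3,\,5/3)$, whose sum is $61/12\neq 5$. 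So the monotonicity of $V(t)=\sum_i(x_i(t)-\bar{x})^2$ cannot be deduced from "symmetry makes the sum invariant"; the Lyapunov functions actually known to work for HK are different (e.g.\ $\sum_{i,j}\min\{(x_i-x_j)^2,\epsilon^2\}$). Second, and more fundamentally, the "uniform lower bound on the Lyapunov decrement across the finite configuration set" that you invoke to rule out infinitely many configuration changes does not exist: for a fixed non-trivial neighbor configuration the one-step decrement depends on the state and tends to $0$ as the distinct neighboring values approach one another, so finiteness of the configuration set gives no bound on the number of transitions. Nor do transitions "only coarsen" the structure: within a chain, edges can be gained as well as lost (with $\epsilon=1$, the state $(0,\,0.9,\,1.8)$ has no edge $\{1,3\}$, but after one step it becomes $(0.45,\,0.9,\,1.35)$ and the graph becomes complete), so the claimed eventual monotonicity of the clustering structure is itself the hard statement, not an ingredient you may assume.

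For comparison, the paper gives no proof of this proposition at all; it is quoted from Blondel--Hendrickx--Tsitsiklis. The arguments known in the literature do not obtain finite-time termination from a per-configuration contraction bound. They first establish \emph{asymptotic} convergence of every $x_i(t)$ (via results on averaging dynamics with positive diagonal and symmetric interaction pattern, or via an induction on the leftmost cluster using monotonicity of $x_1(t)$), then show the limit values are either equal or separated by more than $\epsilon$, and finally upgrade to finite time by noting that once all opinions are close enough to their limits, the clusters decouple (gaps exceed $\epsilon$), each cluster is internally a complete graph, and a single averaging step collapses it exactly. If you replace your stage 3 with an argument of this type (or with a quantified energy-decrease argument using a valid Lyapunov function), the rest of your outline goes through; as written, the finite-time claim is not established.
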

If $x_i^*=x_j^*$ for all $i,j$, it means the opinions reach consensus and the group realizes agreement. Otherwise, the opinions get separated and the group is divisive. In other words, a divisive opinion system with $N_c$ distinct standpoints can be assumed to satisfy the following conditions:
\begin{equation}\label{divisivesys}
  \left\{
    \begin{array}{ll}
      \mathcal{V}=\bigcup_{g=1}^{N_c}\mathcal{V}_g, & \hbox{~$2\leq N_c\leq n$,} \\
      x_i(0)=x^*_g\in(-\infty,\infty), & \hbox{for~$ i\in\mathcal{V}_g, 1\leq g\leq N_c$,}\\
      x^*_p-x^*_q>\epsilon, & \hbox{for~$1\leq q<p\leq N_c$,}
    \end{array}
  \right.
\end{equation}
where $\{\mathcal{V}_g, 1\le g\le N_c\}$ are the separated subgroups of the divisive system at the initial time and $x_i(0)<x_j(0)$ for any $i\in\mathcal{V}_{g_1}, j\in\mathcal{V}_{g_2}$ with $g_1<g_2$.

In this paper, we focus on a divisive system that satisfies (\ref{divisivesys}) and aim to design a strategy to eliminate the disagreement and synchronize all the opinions.
Usually, it is not easy to implement a simple and practicable method to surely induce the consensus of the divisive opinions. The traditional controller based on the states of the system is hardly feasible for the opinion dynamics since it is almost impossible to acquire the opinion values of all agents and also, the interaction topology of the opinion dynamics is highly dependent on the states. Though it has been proved that random noise could synchronize the opinions in \cite{Su2016}, the noises there are admitted to all agents at the beginning, and this elegant theoretical result still has limitation in applying to design a practicable method to eliminate the opinion difference, since it is hard to guarantee all opinions to be infected and the consensus time is unknown neither. In our intervention scheme of this paper, to eliminate the disagreement of system (\ref{basicHKmodel})-(\ref{divisivesys}), we just consider to inject random noises to only one agent, say agent 1. To be specific, the noise intervention model of (\ref{basicHKmodel})-(\ref{divisivesys}) is as follows:
\begin{equation}\label{noisyHKmodel}
  x_i(t+1)=\frac{1}{|\mathcal{N}_i(x(t))|}\sum\limits_{j\in\mathcal{N}_i(x(t))}x_j(t)+I_{\{i=1\}}\xi(t+1),
\end{equation}
where the noise $\{\xi(t),t\geq 1\}$ are independent and uniformly distributed on
\begin{equation}\label{noisestren}
[-\delta_1,\delta_2],\quad 0\leq\delta_1\leq \delta_2,
\end{equation}
and $I_{\{\cdot\}}$ is the indicator function with $I_{\{i=1\}}=1$ when $i=1$ and $I_{\{i=1\}}=0$ otherwise.
If there is no noise intervention ($\delta_2=0$), the model (\ref{noisyHKmodel}) degenerates to the original model (\ref{basicHKmodel}), and the system stays in divisive status with $N_c$ separated opinions. Here, the random noises can be neutral ($\delta_1=\delta_2$) or oriented ($\delta_1\neq \delta_2$). We take the assumption (\ref{noisestren}) based on the consideration that the noise intervention is added to agent 1 (or any agent in the cluster $\mathcal{V}_1$) whose opinion value is smallest according to (\ref{divisivesys}), then intuitively the effective noise should play a nonnegative role in general to affect its opinion value. If the intervention is added to agent $n$ (or any agent in cluster $\mathcal{V}_{N_c}$), the noise strength can be assumed as $0\leq \delta_2\leq\delta_1$. To study the behavior of model (\ref{neigh})-(\ref{noisestren}), we introduce a definition of consensus in the noisy case:
\begin{defn}\label{robconsen}
Define
\begin{equation*}\label{opindist}
  d_{\mathcal{V}}(t)=\max\limits_{i, j\in \mathcal{V}}|x_i(t)-x_j(t)|~~\mbox{and}~~d_{\mathcal{V}}=\limsup\limits_{t\rightarrow \infty}d_{\mathcal{V}}(t).
\end{equation*}
For $\phi\in[0,\infty)$,\\
(i) If $d_{\mathcal{V}} \leq \phi$, we say the system (\ref{neigh})-(\ref{noisyHKmodel}) will reach $\phi$-consensus.\\
(ii) If $P\{d_{\mathcal{V}} \leq \phi\}=1$, we say almost surely (a.s.) the system (\ref{neigh})-(\ref{noisyHKmodel}) will reach $\phi$-consensus.\\
(iii) Let $T=\inf\{t: d_{\mathcal{V}}(t')\leq \phi \mbox{ for all } t'\geq t\}$.
 If $P\{T<\infty\}=1$, we say a.s. the system (\ref{neigh})-(\ref{noisyHKmodel}) reaches $\phi$-consensus in finite time.
\end{defn}
It is noted that compared with the traditional concept of consensus ($\phi=0$) in the deterministic situation, the $\phi$-consensus here (especially when $\phi$ is small) is more common to describe the agreement of the group, since people rarely hold the completely identical standpoint to the others in a community though they may agree to each other in principle.

\section{$\phi$-consensus of noisy system}\label{quasiconsen}

In this part, we present the main result that the noisy system (\ref{neigh})-(\ref{noisestren}) will reach $\phi$-consensus in finite time:
\begin{thm}\label{quasiconthm}
Consider the noise intervened opinion dynamical system (\ref{neigh})-(\ref{noisestren}), then for
any $\epsilon>0$ and $0<\delta_2\leq \epsilon/2n$, the system will a.s.reach $\delta_2$-consensus in finite time.
\end{thm}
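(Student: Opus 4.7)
The plan is to split the argument into an easy absorbing step and a harder merging step, and to reduce the theorem to showing that almost surely there is a finite time $t^*$ at which $d_{\mathcal V}(t^*) \le \epsilon$. Once this happens every neighbourhood $\mathcal N_i(x(t^*))$ coincides with $\mathcal V$, so (\ref{noisyHKmodel}) gives $x_i(t^*+1) = \bar x(t^*) + I_{\{i=1\}} \xi(t^*+1)$ with $\bar x(t) = \tfrac{1}{n}\sum_j x_j(t)$, whence $d_{\mathcal V}(t^*+1) \le |\xi(t^*+1)| \le \delta_2 \le \epsilon/(2n) < \epsilon$, and the same bound propagates by induction to every later $t$. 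Thus the stopping time in Definition \ref{robconsen}(iii) is a.s.\ finite as soon as such a $t^*$ exists.

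To produce $t^*$, I would argue by induction on $k$ that the leftmost $k$ initial clusters a.s.\ merge in finite time into a single tight cluster $\mathcal{C}_k$ containing agent~$1$ and of diameter $\le \delta_2$. For the inductive step, while $\mathcal{C}_k$ remains isolated from $\mathcal{V}_{k+1},\ldots,\mathcal{V}_{N_c}$, a direct calculation using (\ref{noisyHKmodel}) shows that, denoting by $m_t$ the common opinion of $\mathcal{C}_k \setminus \{1\}$, the variable $m_t$ performs a random walk with increments in $[-\delta_1/|\mathcal{C}_k|, \delta_2/|\mathcal{C}_k|]$ and nonnegative mean $(\delta_2-\delta_1)/(2|\mathcal{C}_k|)$, while agent~$1$ fluctuates within $\delta_2 < \epsilon$ of $m_t$, so that the cluster stays connected.

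Such a walk a.s.\ crosses the threshold $x_{k+1}^* - \epsilon$ in finite time: by the strong law of large numbers when $\delta_2 > \delta_1$, and by one-dimensional Chung--Fuchs recurrence when $\delta_1 = \delta_2$. At the first crossing, agents of $\mathcal{C}_k$ begin interacting with $\mathcal{V}_{k+1}$. I would then isolate a \emph{clean-merge} event, running over a bounded block of $L$ subsequent steps, on which the noise is prescribed to lie in a carefully chosen subinterval of $[-\delta_1, \delta_2]$ so that, deterministically, the two clusters amalgamate into a single cluster of diameter $\le \delta_2$ still separated from $\mathcal{V}_{k+2}$ by more than $\epsilon$. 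Because the uniform noise has a positive density throughout $[-\delta_1, \delta_2]$, this event has probability bounded below by a constant $p>0$ depending only on $n,\epsilon,\delta_1,\delta_2$, and a conditional second Borel--Cantelli argument over successive attempts (the random walk restoring the pre-merge geometry whenever a clean merge fails) forces a clean merge in a.s.\ finite time.

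Iterating this step $N_c - 1$ times yields an a.s.\ finite $t^*$, and combined with the absorbing step this gives $\delta_2$-consensus in finite time. The main obstacle I expect is precisely the clean-merge step: at the moment $\mathcal{C}_k$ first meets $\mathcal{V}_{k+1}$ the noise on agent~$1$ may pull it past $\mathcal{V}_{k+1}$ or back behind the rest of $\mathcal{C}_k$, so several different neighbourhood configurations must be analysed in tandem, and the prescribed noise subinterval has to be constructed carefully enough that every one of these transients ends, within the block of length $L$, in a single tight cluster of diameter $\le \delta_2$.
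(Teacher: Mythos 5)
Your overall skeleton (absorbing step once $d_{\mathcal V}(t)\le\epsilon$, a random-walk drift of the merged left cluster until first contact with the next cluster, induction over the $N_c$ clusters) is the same as the paper's, and the absorbing and drift ingredients correspond exactly to Lemma \ref{robconspeci} and Lemma \ref{xvalergo}. The gap is precisely the step you yourself flag as the main obstacle: the merge after first contact. You treat it as a probabilistic ``clean-merge'' event of probability $p>0$ over a block of $L$ steps and appeal to a conditional Borel--Cantelli argument over repeated attempts, with the justification that ``the random walk restor[es] the pre-merge geometry whenever a clean merge fails.'' That restoration claim is not proved and is exactly the dangerous point in this model: only agent $1$ carries noise, so if a failed attempt ends with agent $1$ absorbed into $\mathcal V_{k+1}$ while the remaining agents of $\mathcal C_k$ are left more than $\epsilon$ behind, those agents are noise-free and freeze forever, and no recurrence can repair the configuration. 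Without ruling out such absorbing failure states (or showing failures cannot occur at all), the repeated-trials argument does not close. It is also telling that your scheme never uses the hypothesis $\delta_2\le\epsilon/2n$ beyond the trivial bound $\delta_2<\epsilon$; that hypothesis is needed exactly at the merge.

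The paper resolves the merge deterministically, in a single step, and this is where $\delta_2\le\epsilon/2n$ enters. At the first-contact time $\bar T$ of (\ref{defstopt}), only agent $1$ is a neighbor of $\mathcal V_2$ (see (\ref{tfmerge})), and a direct computation of $x_i(\bar T+1)$ for $i\in\mathcal V_2$, $j\in\mathcal V_1\setminus\{1\}$ and agent $1$ gives the bounds (\ref{dismerge1})--(\ref{dismerge2}), namely
\begin{equation*}
x_i(\bar T+1)-x_j(\bar T+1)\le \tfrac{|\mathcal V_2|}{1+|\mathcal V_2|}\epsilon+\tfrac{|\mathcal V_1|-1}{|\mathcal V_1|}\delta_2,
\qquad
x_1(\bar T+1)-x_j(\bar T+1)\le \tfrac{n-1}{n}\epsilon+\bigl(2-\tfrac1n\bigr)\delta_2,
\end{equation*}
both of which are $\le\epsilon$ whenever $\delta_2\le\epsilon/2n$. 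Hence with probability one (not merely probability $p$) all agents of $\mathcal V_1\cup\mathcal V_2$ are mutual neighbors at $\bar T+1$, Lemma \ref{robconspeci} collapses them to within $\delta_2$, and the induction proceeds with no failure events to analyse. To repair your proof you should replace the clean-merge/Borel--Cantelli machinery by this one-step estimate (or else actually prove that every failure configuration is recoverable, which under noise on a single agent is not true in general and is exactly why the theorem is stated with the conservative bound $\epsilon/2n$ rather than the conjectured $\epsilon$).
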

There are several remarks about Theorem \ref{quasiconthm} here compared to the main result Theorem 2 in \cite{Su2016}.
First, in \cite{Su2016}, the initial opinion values $x(0)$ are arbitrarily given, but limited in an bounded interval $[0,1]^n$, while the initial opinion values here are assumed in advance, but extended to an infinite interval. In addition, there is only one agent that is infected by noise here instead of all as in \cite{Su2016}. These variations causes some new mathematical challenges that the analysis skill in \cite{Su2016} fails in proving this theorem, and here we develop a complete new way.
Second, the effective noise strength $0<\delta_2\leq \epsilon/2n$ here is quite conservative compared to that in \cite{Su2016} where $0<\delta_2\leq \epsilon/2$ was obtained. But it is still valuable for practical application, since it is covert and hence more feasible to use tiny noise, such as some minor disturbed information or free information flow, to eliminate the opinion difference. Actually, we conjecture that the effective noise strength here can be extended to $0<\delta_2\leq \epsilon$ which is also critical as in \cite{Su2016}. However, the analysis methods in both \cite{Su2016} and this paper seems to be incapable of completing this.

To prove Theorem \ref{quasiconthm}, the following lemmas are needed:
\begin{lem}\cite{Blondel2009}\label{monosmlem}
Suppose $\{z_i, \, i=1, 2, \ldots\}$ is a nondecreasing (nonincreasing) real sequence.  Then for any integer $s\geq 0$, the sequence
$\{g_s(k)=\frac{1}{k}\sum_{i=s+1}^{s+k}z_i, k\geq 1\}$ is monotonically nondecreasing (nonincreasing) with respect to $k$.
\end{lem}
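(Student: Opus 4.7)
The plan is to reduce to the nondecreasing case (the nonincreasing case follows by applying the result to the sequence $\{-z_i\}$) and then verify the one-step increment $g_s(k+1) \geq g_s(k)$ for every $k \geq 1$; transitivity then yields monotonicity in $k$ for all $k' > k$. Throughout I would keep $s \geq 0$ fixed and treat $k$ as the running variable.

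The core computation is straightforward. I would write out the desired inequality
\begin{equation*}
\frac{1}{k+1}\sum_{i=s+1}^{s+k+1} z_i \;\geq\; \frac{1}{k}\sum_{i=s+1}^{s+k} z_i,
\end{equation*}
multiply through by the positive quantity $k(k+1)$, and split the left-hand sum as $\sum_{i=s+1}^{s+k} z_i + z_{s+k+1}$. After cancelling the common sum $\sum_{i=s+1}^{s+k} z_i$ that appears with coefficient $k$ on both sides, the inequality collapses to the clean statement
\begin{equation*}
k\, z_{s+k+1} \;\geq\; \sum_{i=s+1}^{s+k} z_i.
\end{equation*}
This last inequality is immediate from the hypothesis: since the sequence is nondecreasing, each of the $k$ summands $z_{s+1}, \ldots, z_{s+k}$ is bounded above by $z_{s+k+1}$, so their sum is at most $k\, z_{s+k+1}$.

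There is essentially no obstacle to overcome — the statement is an elementary fact about arithmetic means. The only care required is to track indices correctly when subtracting consecutive running averages and to handle the nonincreasing case by the sign-flip trick rather than redoing the algebra. Once the one-step inequality is established, the full monotonicity of $k \mapsto g_s(k)$ follows by induction.
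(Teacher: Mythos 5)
Your proof is correct: the reduction to the one-step inequality $g_s(k+1)\geq g_s(k)$, the cancellation leading to $k\,z_{s+k+1}\geq \sum_{i=s+1}^{s+k}z_i$, and the sign-flip for the nonincreasing case are all sound. The paper itself gives no proof of this lemma — it is cited directly from Blondel, Hendrickx and Tsitsiklis — so there is nothing to compare against; your argument is the standard elementary one and fully suffices.
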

\begin{lem}\label{robconspeci}
For the system (\ref{neigh})-(\ref{noisestren}), if $0<\delta_2\leq \epsilon$ and there exists a finite time $T\geq 0$ such that $d_\mathcal{V}(T)\leq \epsilon$, then
$d_\mathcal{V}\leq \delta_2$ a.s. on $\{T<\infty\}$.
\end{lem}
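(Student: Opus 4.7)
The plan is to argue by forward induction starting at time $T$, using the simple geometric fact that once the diameter of the opinion profile falls below the confidence threshold $\epsilon$, the interaction graph becomes complete and the dynamics become one-step contracting up to the noise on agent $1$. Concretely, whenever $d_\mathcal{V}(t)\le \epsilon$, the definition (\ref{neigh}) forces $\mathcal{N}_i(x(t))=\mathcal{V}$ for every $i$, so the update (\ref{noisyHKmodel}) collapses to
\begin{equation*}
x_i(t+1)=\bar x(t)+I_{\{i=1\}}\xi(t+1),\qquad \bar x(t)=\frac{1}{n}\sum_{j=1}^{n} x_j(t).
\end{equation*}
All agents $i\neq 1$ land on the common value $\bar x(t)$, and only agent $1$ is displaced, yielding $d_\mathcal{V}(t+1)=|\xi(t+1)|\le \delta_2$, where the last bound comes directly from the support $[-\delta_1,\delta_2]$ with $\delta_1\le\delta_2$ in (\ref{noisestren}).

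Next I plug the hypothesis $d_\mathcal{V}(T)\le \epsilon$ into the observation at $t=T$ to get $d_\mathcal{V}(T+1)\le \delta_2$ on $\{T<\infty\}$. Since $\delta_2\le \epsilon$ by assumption, the profile at time $T+1$ still has diameter at most $\epsilon$, the graph stays complete, and the observation applies again to give $d_\mathcal{V}(T+2)=|\xi(T+2)|\le \delta_2$. Iterating this step, I conclude $d_\mathcal{V}(t)=|\xi(t)|\le \delta_2$ for every $t\ge T+1$, hence $d_\mathcal{V}=\limsup_{t\to\infty} d_\mathcal{V}(t)\le \delta_2$ almost surely on $\{T<\infty\}$, which is the statement.

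There is essentially no serious obstacle: the key point is identifying the right self-sustaining invariant, namely that the hypothesis $\delta_2\le\epsilon$ is exactly what prevents the single noisy agent from ever dropping out of the other agents' confidence windows, so the complete-graph structure persists once reached. All randomness then reduces to the pathwise uniform bound $|\xi(t)|\le \delta_2$, making the induction fully deterministic on $\{T<\infty\}$. The only subtlety worth flagging in the write-up is to verify the base case carefully (i.e., that $d_\mathcal{V}(T)\le\epsilon$ suffices — not $\le\delta_2$ — to trigger the collapse), since this is what distinguishes the hypothesis of the lemma from its conclusion.
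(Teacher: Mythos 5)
Your proposal is correct and is essentially the paper's own argument: once $d_\mathcal{V}(t)\le\epsilon$ the graph is complete, the update maps every agent to the average with only agent $1$ perturbed, so $d_\mathcal{V}(t+1)=|\xi(t+1)|\le\delta_2\le\epsilon$, and the paper's "repeating the procedure" is exactly the induction you spell out. No gap.
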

\begin{lem}\label{xvalergo}
Still with conditions given in Lemma \ref{robconspeci}, then on $\{T<\infty\}$ it a.s. occurs
\begin{equation*}
  \limsup\limits_{t\rightarrow \infty}x_i(t)= +\infty.
\end{equation*}
\end{lem}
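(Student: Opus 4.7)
The plan is as follows. I would first argue that once $d_{\mathcal{V}}(T)\le\epsilon$ occurs, the noisy update (\ref{noisyHKmodel}) reduces to a fully-connected average: at time $T+1$,
\begin{equation*}
x_i(T+1)=\bar{x}(T)\;(i\ne 1),\qquad x_1(T+1)=\bar{x}(T)+\xi(T+1),
\end{equation*}
where $\bar{x}(t):=\frac{1}{n}\sum_{j=1}^{n}x_j(t)$. Since $|\xi(T+1)|\le\delta_2\le\epsilon$, one has $d_{\mathcal{V}}(T+1)\le\delta_2$ and the full-neighbor configuration is preserved; by induction this persists for every $t\ge T$, and averaging yields the one-dimensional recursion
\begin{equation*}
\bar{x}(t+1)=\bar{x}(t)+\frac{1}{n}\xi(t+1),\qquad t\ge T,
\end{equation*}
together with the uniform bound $|x_i(t)-\bar{x}(t)|\le\delta_2$ for every $i$ and every $t\ge T+1$. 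Consequently, it suffices to prove $\limsup_{t\to\infty}\bar{x}(t)=+\infty$ a.s.\ on $\{T<\infty\}$, which by telescoping is equivalent to $\limsup_{t\to\infty}S_t=+\infty$ a.s.\ for the i.i.d.\ random-walk sum $S_t:=\sum_{s=T+1}^{t}\xi(s)$.

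Next I would handle the two noise regimes separately. If $\delta_1<\delta_2$ (oriented noise), then $E[\xi(s)]=(\delta_2-\delta_1)/2>0$ and the strong law of large numbers delivers $S_t\to+\infty$ a.s. If $\delta_1=\delta_2=:\delta>0$ (neutral noise), then $\xi(s)$ is i.i.d., bounded, mean zero, with positive variance $\delta^2/3$; the event $\{\limsup_t S_t=+\infty\}$ is permutation invariant, hence has probability $0$ or $1$ by the Hewitt--Savage zero-one law, and the central limit theorem $S_t/\sqrt{t}\Rightarrow N(0,\delta^2/3)$ rules out the former, giving probability $1$. In either case $\bar{x}(t)$ is unbounded above along a subsequence, and transferring this to each $x_i$ is immediate from $|x_i(t)-\bar{x}(t)|\le\delta_2$.

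The main obstacle is really the first step---verifying that the condition $d_{\mathcal{V}}\le\epsilon$, once attained, is self-preserving and collapses the full multi-agent dynamics to the simple scalar recursion above; the probabilistic content is then classical random-walk theory. A secondary technical point is that $T$ is a stopping time, so when invoking the SLLN or Hewitt--Savage on $\{T<\infty\}$ one should note that $\{\xi(t):t>T\}$ remains an i.i.d.\ sequence independent of $\mathcal{F}_T$, which is exactly what those tools require.
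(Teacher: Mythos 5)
Your proposal is correct and takes essentially the same route as the paper's proof: once the full-neighbor configuration is shown to persist (the content of Lemma \ref{robconspeci}), every opinion tracks the group average, which evolves as the random walk $\frac{1}{n}\sum_{k}\xi(k)$, and the conclusion follows from the LLN when $\delta_1<\delta_2$ and from the a.s.\ unboundedness of a mean-zero nondegenerate walk when $\delta_1=\delta_2$. The only difference is cosmetic: where the paper cites Theorem 5.4.3 of Chow--Teicher for $\limsup_{t\rightarrow\infty}S_t=+\infty$ in the neutral case, you rederive that classical fact via the Hewitt--Savage zero-one law combined with the CLT, which is an equally valid justification.
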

Lemma \ref{robconspeci} states that once all the opinions locate within the region with size of the confidence threshold, the noise with strength no more than $\epsilon$ cannot separate them anymore. Lemma \ref{xvalergo} says that once the system achieves $\delta_2$-consensus, the opinions driven by random noises can assume arbitrarily large region as time tends to infinity no matter how weak the noise is.
The proofs of Lemma \ref{robconspeci} and \ref{xvalergo} are given in Appendix.

\noindent\emph{Proof of Theorem \ref{quasiconthm}.}
At $t=0$, the opinions form $N_c$ separated static clusters as $\mathcal{V}_1,\ldots,\mathcal{V}_{N_c}$, and the noises only affect agent $1\in \mathcal{V}_1$ whose opinion value is at the bottom in the beginning. Let us consider clusters $\mathcal{V}_1$ and $\mathcal{V}_2$. By Lemma \ref{xvalergo}, the opinions of $\mathcal{V}_1$ driven by noises will finally interact with opinions of cluster $\mathcal{V}_2$ at some time, and before that moment, the agents of cluster $\mathcal{V}_1$ possess the same opinion value except agent 1, whose opinion value has an additive $\xi(t)$ according to (\ref{noisyHKmodel}). This means that it is the agent 1 of $\mathcal{V}_1$ who will firstly interact with the agents of $\mathcal{V}_2$. That is, a.s. there is a moment $T_1>0$ such that for all $i\in \mathcal{V}_2$,
$ x_i(T_1)-x_1(T_1)\leq\epsilon.$
Denote
\begin{equation}\label{defstopt}
  \bar{T}=\inf\{t:  x_i(t)-x_1(t)\leq\epsilon,\,\,i\in\mathcal{V}_2\},
\end{equation}
then $P\{\bar{T}<\infty\}=1$ and a.s.
\begin{equation}\label{tfmerge}
\begin{split}
   x_i(\bar{T})-x_1(\bar{T})&\leq\epsilon,\\
   x_i(\bar{T})-x_j(\bar{T})&>\epsilon,\, j\in\mathcal{V}_1-\{1\}
\end{split}
\end{equation}
as well as a.s.
\begin{equation}\label{btfmerge}
d_{\mathcal{V}_1}(\bar{T})\leq \delta_2.
\end{equation}
Then by (\ref{noisyHKmodel})
\begin{equation}\label{nextmerge2}
  \begin{split}
    x_i(\bar{T}+1)= &\frac{1}{1+|\mathcal{V}_2|}\bigg(x_1(\bar{T})+\sum\limits_{k\in \mathcal{V}_2}x_k(\bar{T})\bigg)\\
            = & \frac{1}{1+|\mathcal{V}_2|}x_1(\bar{T})+\frac{|\mathcal{V}_2|}{1+|\mathcal{V}_2|}x_i(\bar{T}),\quad i\in\mathcal{V}_2\\
  \end{split}
\end{equation}
and
\begin{equation}\label{nextmerge1}
  \begin{split}
    x_j(\bar{T}+1) = &\frac{1}{|\mathcal{V}_1|}\bigg(x_1(\bar{T})+\sum\limits_{k\in \mathcal{V}_1-\{1\}}x_k(\bar{T})\bigg)=  \frac{1}{|\mathcal{V}_1|}x_1(\bar{T})+\frac{|\mathcal{V}_1|-1}{|\mathcal{V}_1|}x_j(\bar{T})\\
            =& \frac{1}{|\mathcal{V}_1|}x_1(\bar{T})+\frac{|\mathcal{V}_1|-1}{|\mathcal{V}_1|}(x_1(\bar{T})-\xi(\bar{T}))\\
=&x_1(\bar{T})-\frac{|\mathcal{V}_1|-1}{|\mathcal{V}_1|}\xi(\bar{T}),\quad j\in\mathcal{V}_1-\{1\}
  \end{split}
\end{equation}
and
\begin{equation}\label{nextmerge}
  \begin{split}
  x_1(\bar{T}+1)= &\frac{1}{|\mathcal{V}_1|+|\mathcal{V}_2|}\bigg(\sum\limits_{j\in \mathcal{V}_1}x_j(\bar{T})+\sum\limits_{i\in \mathcal{V}_2}x_2(\bar{T})\bigg)+\xi(\bar{T}+1)\\
            = & \frac{1}{|\mathcal{V}_1|+|\mathcal{V}_2|}(|\mathcal{V}_1|x_1(\bar{T})+|\mathcal{V}_2|x_i(\bar{T}))-\frac{|\mathcal{V}_1|-1}{|\mathcal{V}_1|+|\mathcal{V}_2|}\xi(\bar{T})+\xi(\bar{T}+1)\\
\leq & \frac{1}{1+|\mathcal{V}_2|}(x_1(\bar{T})+|\mathcal{V}_2|x_i(\bar{T}))-\frac{|\mathcal{V}_1|-1}{|\mathcal{V}_1|+|\mathcal{V}_2|}\xi(\bar{T})+\xi(\bar{T}+1)\\
\leq & x_1(\bar{T})+\frac{|\mathcal{V}_2|}{1+|\mathcal{V}_2|}\epsilon-\frac{|\mathcal{V}_1|-1}{|\mathcal{V}_1|+|\mathcal{V}_2|}\xi(\bar{T})+\xi(\bar{T}+1),\quad i\in\mathcal{V}_2,\\
  \end{split}
\end{equation}
where the last two inequalities follow from $|\mathcal{V}_1|\geq 1$, (\ref{tfmerge}) and Lemma \ref{monosmlem}.
By (\ref{nextmerge2}) and (\ref{nextmerge1}), we have a.s.
\begin{equation}\label{dismerge1}
\begin{split}
  x_i(\bar{T}+1)-x_j(\bar{T}+1)  =& \frac{|\mathcal{V}_2|}{1+|\mathcal{V}_2|}(x_i(\bar{T})-x_1(\bar{T}))+\frac{|\mathcal{V}_1|-1}{|\mathcal{V}_1|}\xi(\bar{T})\\
    \leq & \frac{|\mathcal{V}_2|}{1+|\mathcal{V}_2|}\epsilon+\frac{|\mathcal{V}_1|-1}{|\mathcal{V}_1|}\delta_2,\,i\in\mathcal{V}_2,\,j\in\mathcal{V}_1-\{1\}\\
\end{split}
\end{equation}
and by (\ref{nextmerge1}), (\ref{nextmerge}), we have a.s.
\begin{equation}\label{dismerge2}
\begin{split}
 x_1(\bar{T}+1)-x_j(\bar{T}+1)\leq &\frac{|\mathcal{V}_1|-1}{|\mathcal{V}_1|}\xi(\bar{T})+ \frac{|\mathcal{V}_2|}{1+|\mathcal{V}_2|}\epsilon-\frac{|\mathcal{V}_1|-1}{|\mathcal{V}_1|+|\mathcal{V}_2|}\xi(\bar{T})\\
  &+\xi(\bar{T}+1)\\
  \leq &\frac{n-1}{n}\epsilon+(1-\frac{1}{n})\delta_2+\delta_2\\
= &\frac{n-1}{n}\epsilon+(2-\frac{1}{n})\delta_2,
\end{split}
\end{equation}
since $\frac{1}{|\mathcal{V}_1|}+\frac{|\mathcal{V}_1|-1}{|\mathcal{V}_1|+|\mathcal{V}_2|}\geq \frac{1}{n}$ holds for $1\leq|\mathcal{V}_1|,|\mathcal{V}_2|\leq n, |\mathcal{V}_1|+|\mathcal{V}_2|\leq n$.
If $\delta_2\leq \epsilon/2n$, by (\ref{dismerge1}) and (\ref{dismerge2}), it has a.s.
\begin{equation}\label{merge}
  x_i(\bar{T}+1)-x_j(\bar{T}+1)\leq \epsilon, \quad x_1(\bar{T}+1)-x_j(\bar{T}+1)\leq \epsilon.
\end{equation}
(\ref{merge}) implies that at $\bar{T}+1$, all agents of $\mathcal{V}_1$ and $\mathcal{V}_2$ are neighbors to each other, then by Lemma \ref{robconspeci}, the opinions of $\mathcal{V}_1$ and $\mathcal{V}_2$ reach $\delta_2$-consensus in finite time. Repeating the above procedure with respect to clusters of $\mathcal{V}_3$ to $\mathcal{V}_{N_c}$, we can get that all opinions a.s. reach $\delta_2$-consensus in finite time. This completes the proof. \hfill $\Box$

\section{Soft control strategy}\label{Softcontrol}
In the last section, we show that weak noise can eliminate the disagreement of opinions and induce the consensus of the system. However, Lemma \ref{xvalergo} shows that the synchronized opinions driven by random noise could go to infinity, which contradicts with our practical wish to control opinions to some objective value. In this part, we will design a simple soft control strategy to guide the opinions to the targeted opinion value. In this soft control strategy, we introduce in the system a ``leader'' agent who is closed-minded and sticks to the fixed opinion value, but other agents will take into account the leader's opinion once the leader locates in their neighborhood region. Then we prove that the opinions will almost surely gather to the leader's opinion in finite time. To demonstrate the sketch of the above soft control scheme, denote the leader as agent $\textit{l}$, and we simply set $\textit{l}$'s opinion value to be
\begin{equation}\label{leadervalue}
 x_\textit{l}(t)=A>x_{N_c}^*+\epsilon,\quad t\geq 0
\end{equation}
in the divisive system (\ref{neigh})-(\ref{noisestren}), then we prove
\begin{thm}\label{softcontthm}
For the divisive system (\ref{neigh})-(\ref{noisestren}) with an additional leader agent $\textit{l}$ whose opinion value satisfies (\ref{leadervalue}), still with the conditions given in Theorem \ref{quasiconthm}, and denote $d_\mathcal{V}^A(t)=\max\limits_{i\in\mathcal{V}}|x_i(t)-A|$, then a.s. $\limsup\limits_{t\rightarrow\infty}d_\mathcal{V}^A(t)\leq 2\delta_2$.
\end{thm}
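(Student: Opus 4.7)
My plan is a three-stage argument that repeatedly calls on Theorem~\ref{quasiconthm} and Lemmas~\ref{robconspeci}--\ref{xvalergo}. In Stage~(i) I show that almost surely there is a finite time $\tau$ at which $d_{\mathcal{V}}(\tau)\leq \delta_2$ and $\max_{i\in\mathcal{V}} x_i(\tau)\geq A-\epsilon$. As long as $\max_{i} x_i(t)<A-\epsilon$, the leader $\ell$ is not in anyone's neighbor set, so the $\mathcal{V}$-dynamics coincide exactly with those of the leader-free system (\ref{noisyHKmodel})--(\ref{noisestren}). Theorem~\ref{quasiconthm} therefore delivers $\delta_2$-consensus in finite time along this coupled trajectory, while Lemma~\ref{xvalergo} forces the merged cluster to cross $A-\epsilon$ in finite time (otherwise the leader-free trajectory would satisfy $\limsup_t x_i(t)\leq A-\epsilon$, contradicting Lemma~\ref{xvalergo}). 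If the cluster happens to reach the leader before merging is complete, the merging step of Theorem~\ref{quasiconthm} adapts verbatim once one treats $\ell$ as a fixed extra neighbor of the uppermost subcluster, again delivering the desired $\tau$.

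Stage~(ii) handles the ``partial visibility'' phase right after $\tau$. Since $d_{\mathcal{V}}(\tau)\leq \delta_2 \leq \epsilon$, the agents in $\mathcal{V}$ remain pairwise neighbors, but only those with $x_i(\tau)\ge A-\epsilon$ (the ``high'' agents) include $\ell$ in their neighbor sets. A one-step computation patterned on (\ref{nextmerge2})--(\ref{dismerge2}) yields
$$x_{\mathrm{high}}(\tau+1)-x_{\mathrm{low}}(\tau+1) \;=\; \frac{A-\bar{x}(\tau)}{n+1}\;\leq\; \frac{\epsilon+\delta_2}{n+1},$$
so after the noise contribution on agent~$1$ the spread satisfies $d_{\mathcal{V}}(\tau+1)\leq (\epsilon+\delta_2)/(n+1)+\delta_2 \leq \epsilon$, where the last inequality uses $\delta_2\leq \epsilon/(2n)$. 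A leader-aware version of Lemma~\ref{robconspeci}, in which $\ell$ is treated as an extra fixed anchor in the averaging, then keeps $d_{\mathcal{V}}\leq \delta_2$ at all later times. Because each step deterministically pulls ``low'' agents toward the leader while the ``high'' agents are pulled toward $A$, the set of ``low'' agents shrinks monotonically and becomes empty after finitely many steps.

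Stage~(iii) is the absorbed regime, where every agent in $\mathcal{V}$ has $\ell$ as a neighbor. The update for $i\neq 1$ then reads $x_i(t+1)=(\sum_{j\in\mathcal{V}} x_j(t)+A)/(n+1)$, so all non-noisy agents collapse onto a common value $y(t+1)$ and $x_1(t+1)=y(t+1)+\xi(t+1)$. Substituting $x_1(t)=y(t)+\xi(t)$ into $y(t+1)=((n-1)y(t)+x_1(t)+A)/(n+1)$ yields the scalar recurrence
$$e(t+1)=\frac{n}{n+1}\, e(t)-\frac{1}{n+1}\,\xi(t),\qquad e(t):=A-y(t).$$
Iterating and using $|\xi(t)|\leq \delta_2$ together with $\sum_{j\ge 0}(n/(n+1))^{j}=n+1$ gives $\limsup_t|e(t)|\leq \delta_2$. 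Combining $|y(t)-A|\leq \delta_2$ with the bound $|x_1(t)-y(t)|=|\xi(t)|\leq \delta_2$ (and $x_i(t)=y(t)$ for $i\neq 1$) then delivers $d_{\mathcal{V}}^A(t)\leq 2\delta_2$ in the limit, as claimed.

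The main obstacle is Stage~(ii). The partial-visibility phase must simultaneously preserve the regime $d_{\mathcal{V}}\leq \epsilon$ required by the leader-aware Lemma~\ref{robconspeci} and terminate in finitely many steps despite occasional noise pushback on agent~$1$. The hypothesis $\delta_2\leq \epsilon/(2n)$ is exactly the margin that closes the spread inequality, mirroring its role in the proof of Theorem~\ref{quasiconthm}; verifying the finite-termination part is essentially a deterministic bookkeeping argument on the high/low partition, and I expect this to require the most care.
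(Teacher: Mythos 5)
Stages (i) and (iii) of your plan are essentially the paper's argument (coupling with the leader-free system while everyone is below $A-\epsilon$, then the geometric contraction $e(t+1)=\frac{n}{n+1}e(t)-\frac{1}{n+1}\xi(t)$ once every agent sees the leader, giving $\delta_2$ for $j\neq 1$ and $2\delta_2$ for agent~1). The genuine gap is in Stage (ii), exactly where you anticipated trouble: the claim that ``the set of low agents shrinks monotonically and becomes empty after finitely many steps'' by ``deterministic bookkeeping'' is not justified and is false as stated. First, the high set can empty out again: if the only agent above $A-\epsilon$ is the noisy agent~1, its next value is at least $A-\frac{n(\epsilon+\delta_2)}{n+1}-\delta_2$, and with $\delta_2$ as large as $\epsilon/2n$ one has $(2n+1)\delta_2>\epsilon$, so an adverse noise draw can push it back below $A-\epsilon$ while the non-noisy agents, which moved to the leaderless average $\bar{x}(\tau)$, may also still be below $A-\epsilon$; the high/low partition is therefore not monotone. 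Second, once no agent sees the leader the dynamics revert to the leader-free system, and in the neutral case $\delta_1=\delta_2$ the cluster performs a zero-drift bounded-increment random walk, so no deterministic argument can force it (let alone all of $\mathcal{V}$) above $A-\epsilon$ in finitely many steps; termination of this phase is only almost sure and needs a stochastic recurrence argument. The paper closes exactly this gap as follows: it first proves $d_{\mathcal{V}}(t)\leq\epsilon$ for all $t$ a.s.\ (your one-step bound $\frac{\epsilon+\delta_2}{n+1}+\delta_2\leq\epsilon$ is the same computation as its (\ref{distan1j})), then compares with the leader-free trajectory $y_i(t)$, showing $x_i(t)\geq y_i(t)$ up to $T_l=\inf\{t:d_{\mathcal{V}}^A(t)\leq\epsilon\}$ via Lemma~\ref{monosmlem}, and invokes Lemma~\ref{xvalergo} ($\limsup_t y_i(t)=+\infty$ a.s.) to conclude $P\{T_l<\infty\}=1$; after $T_l$ the condition $\max\{d_{\mathcal{V}}(t),d_{\mathcal{V}}^A(t)\}\leq\epsilon$ is preserved forever and your Stage (iii) applies. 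You should replace the deterministic bookkeeping by this comparison-plus-Lemma~\ref{xvalergo} step.

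Two smaller imprecisions: during the partial-visibility phase the spread is only bounded by $\frac{\epsilon+\delta_2}{n+1}+\delta_2\leq\epsilon$, not by $\delta_2$, so the ``leader-aware Lemma~\ref{robconspeci}'' giving $d_{\mathcal{V}}\leq\delta_2$ is only available once all of $\mathcal{V}$ (or none of it) has the leader as a neighbor; and your $\tau$ only guarantees that \emph{one} agent is within $\epsilon$ of $A$, whereas the contraction of Stage (iii) needs \emph{all} agents within $\epsilon$ of $A$, i.e.\ the paper's stopping time $T_l$, which is precisely what the stochastic argument above delivers.
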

\begin{rem}
Here we only demonstrate a simple scheme by setting the leader above all the agents for convenience. Actually, a noise-free HK opinion system with closed-minded agents has been shown to converge (maybe not in finite time) \cite{Chazelle2015}. If we consider the case when the leader agent can be placed anywhere, such as in the intermediate region of the divisive opinions, the disagreement elimination strategy may not work by introducing noise to only one agent, when more flexible strategies can be designed based on the spirit of noise intervention here, such as more agents can be considered to receive noise.
\end{rem}
\noindent\emph{Proof of Theorem \ref{softcontthm}.}
Theorem \ref{quasiconthm} tells that the opinions of all agents in $\mathcal{V}$ will reach $\delta_2$-consensus in finite time $T$, and suppose $T=0$ a.s. without loss of generality.
If $x_i(t)<A-\epsilon, i\in\mathcal{V}$, it is easy to get that a.s.
\begin{equation}\label{smallthanA}
  |x_1(t+1)-x_j(t)|\leq \delta_2\leq \epsilon,\, j\in\mathcal{V}-\{1\}.
\end{equation}
If $\max\{d_\mathcal{V}(t),d_\mathcal{V}^A(t) \}\leq\epsilon$ a.s., we have for $i\in\mathcal{V}$,
\begin{equation}\label{neighinA}
 \begin{split}
  x_i(t+1)=&\frac{1}{n+1}\bigg(A+\sum\limits_{k\in\mathcal{V}}x_k(t)\bigg)+I_{\{i=1\}}\xi(t+1)\\
  =&\frac{1}{n+1}A+\frac{n}{n+1}x_j(t)+\frac{1}{n+1}\xi(t)+I_{\{i=1\}}\xi(t+1),\,\,\,j\in\mathcal{V}-\{1\}
  \end{split}
\end{equation}
then a.s.
\begin{equation}\label{distancetoA}
\begin{split}
d_\mathcal{V}(t+1)\leq& |\xi(t+1)|\leq \delta_2\leq \epsilon,\\
  |A-x_i(t+1)|\leq&|\frac{n}{n+1}(A-x_i(t))|+\frac{|\xi(t)|}{n+1}+|\xi(t+1)|\\
  \leq & \frac{n}{n+1}\epsilon+\frac{n+2}{n+1}\delta_2\leq \epsilon,\,\,\,i\in\mathcal{V}
\end{split}
\end{equation}
since $\delta_2\leq \frac{\epsilon}{2n}$. Repeating the above procedure, it obtains that once (\ref{distancetoA}) holds at some moment, it holds forever. Next we will show that $d_\mathcal{V}(t)\leq \epsilon, t\geq 0$ a.s.. If not, by (\ref{smallthanA}) and (\ref{distancetoA}), it is easy to know that it could only happen when $x_1(t)\in[A-\epsilon, A)$ and  $x_j(t)\in[A-\epsilon-\delta_2,A-\epsilon]$. In this case
\begin{equation*}
\begin{split}
  x_1(t+1)=&\frac{1}{n+1}\bigg(A+x_1(t)+\sum\limits_{j\in\mathcal{V}-\{1\}}x_j(t)\bigg)+\xi(t+1)\\
  =&\frac{A+x_1(t)}{n+1}+\frac{n-1}{n+1}x_j(t)+\xi(t+1)\\
  x_j(t+1)=&\frac{1}{n}\bigg(x_1(t)+\sum\limits_{k\in\mathcal{V}-\{1\}}x_k(t)\bigg)\\
  =&\frac{n-1}{n}x_j(t)+\frac{1}{n}x_1(t),\quad j\in\mathcal{V}-\{1\},
\end{split}
\end{equation*}
yielding
\begin{equation}\label{distan1j}
\begin{split}
  x_1(t+1)-x_j(t+1)=&\frac{A-x_j(t)}{n+1}-\frac{1}{n(n+1)}(x_1(t)-x_j(t))+\xi(t+1)\\
  \leq& \frac{\epsilon+\delta_2}{n+1}+\delta_2\leq \epsilon,\,a.s..
\end{split}
\end{equation}
Hence
\begin{equation}\label{distan1jever}
 d_\mathcal{V}(t)\leq\epsilon, \,t\geq 0\quad a.s..
\end{equation}
Denote the original opinion values that satisfy (\ref{neigh})-(\ref{noisestren}) without introducing the leader agent as $y_i(t), i\in\mathcal{V}, t\geq 1$, and
\begin{equation}\label{stopreachleader}
  T_l=\inf\{t:d_\mathcal{V}^A(t)\leq \epsilon\},
\end{equation}
then by Lemma \ref{monosmlem} and (\ref{distan1jever}), we have
\begin{equation}\label{xyvalue}
  x_i(t)\geq y_i(t),\,\,a.s.,\, t\leq T_l.
\end{equation}
By Lemma \ref{xvalergo}, we have
\begin{equation}\label{Tlprob}
  P\{1\leq T_l<\infty\}=1.
\end{equation}
By (\ref{distancetoA}), (\ref{distan1jever}) and (\ref{Tlprob}),
\begin{equation}\label{distanVA}
  \max\{d_\mathcal{V}(t),d_\mathcal{V}^A(t)\}\leq \epsilon,\,\,a.s.\,\,t\geq T_l.
\end{equation}
Then on $\{1\leq T_l<\infty\}$, for $j\in\mathcal{V}-\{1\}, t\geq T_l,$ by (\ref{neighinA})
\begin{equation}\label{distancetoA2}
\begin{split}
  |A-x_j(t+1)|\leq&\frac{n}{n+1}|A-x_j(t)|+\frac{1}{n+1}|\xi(t)|\\
  \leq& \bigg(\frac{n}{n+1}\bigg)^2|A-x_j(t-1)|+\frac{n}{n+1}\frac{|\xi(t-1)|}{n+1}+\frac{|\xi(t)|}{n+1}\\
  &\cdots\\
  \leq & \bigg(\frac{n}{n+1}\bigg)^{t-T_l+1}|A-x_j(T_l)|+\sum\limits_{k=T_l}^t\bigg(\frac{n}{n+1}\bigg)^{t-k}\frac{|\xi(k)|}{n+1}\\
  \leq& \bigg(\frac{n}{n+1}\bigg)^{t-T_l+1}|A-x_j(T_l)|\\
  &+ (n+1)\bigg(1-\bigg(\frac{n}{n+1}\bigg)^{t-T_l+1}\bigg)\frac{\delta_2}{n+1},\,a.s.,
\end{split}
\end{equation}
whence
\begin{equation*}
  \limsup\limits_{t\rightarrow \infty}|A-x_j(t+1)|\leq \delta_2,\,\,a.s.,\,j\in\mathcal{V}-\{1\}
\end{equation*}
and a.s.
\begin{equation*}
\begin{split}
   \limsup\limits_{t\rightarrow \infty}|A-x_1(t+1)|\leq& \limsup\limits_{t\rightarrow \infty}(|A-x_j(t+1)|+|x_1(t+1)-x_j(t+1)|)\\
   \leq& 2\delta_2,
   \end{split}
\end{equation*}
yielding the conclusion. \hfill $\Box$

\section{Stopping time of reaching $\phi$-consensus}\label{calstoptime}
Theorem \ref{quasiconthm} tells that the divisive system (\ref{neigh})-(\ref{noisestren}) can a.s. achieve $\delta_2$-consensus in finite time under the drive of the weak random noise, and Theorem \ref{softcontthm} shows that the opinions will run to the objective value in finite time with the soft control of a leader agent. Then the next interest is what the finite time is. Actually, by the proof of Theorem \ref{quasiconthm}, the time $T$ when the noisy system (\ref{neigh})-(\ref{noisestren}) reach $\delta_2$-consensus is a stopping time which is finite with $P\{T=\infty\}=0$. For a finite stopping time, its expectation could be finite or infinite, then with the conditions given in Theorem \ref{quasiconthm}, we have
\begin{thm}\label{stoptimethm}
Let $T=\inf\{t: d_{\mathcal{V}}(t)\leq \epsilon\}$, then
\begin{enumerate}
  \item $\textbf{E}\,T=\infty$, if $\delta_1=\delta_2$;
  \item $\textbf{E}\,T\leq  \frac{2n}{\delta_2-\delta_1}(x_{N_c}^*-x_1^*+(N_c-1)\delta_2)$, if $\delta_1<\delta_2$.
\end{enumerate}
\end{thm}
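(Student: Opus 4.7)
For part~(1), I plan to exploit that when $\delta_1 = \delta_2$, the noise $\xi$ is symmetric about zero, so the cluster-mean process $m(t) := |\mathcal{V}_1|^{-1}\sum_{s=1}^{t}\xi(s)$ identified in the proof of Theorem~\ref{quasiconthm} is a mean-zero martingale with bounded increments. Before the first merger, agent~1 sits at $x_1^* + m(t-1) + \xi(t)$ while the remaining agents of $\mathcal{V}_1$ sit at $x_1^* + m(t-1)$, so the first-merger stopping time $\bar{T}$ of~(\ref{defstopt}) is, up to a bounded single-step correction, the first passage of $m$ above a strictly positive level $c := x_2^* - x_1^* - \epsilon$. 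A classical Wald/optional-stopping contradiction then yields $\textbf{E}\,\bar{T} = \infty$: if $\textbf{E}\,\bar{T}$ were finite, optional stopping on the mean-zero walk would force $\textbf{E}[m(\bar{T})] = 0$, whereas a direct analysis of the stopping condition gives $m(\bar{T}) \ge c - \delta_2(|\mathcal{V}_1|-1)/|\mathcal{V}_1|$ a.s., which is strictly positive (passing if necessary to the analogous walk controlling a later merger, where the cumulative gap $x_{N_c}^*-x_1^*-\epsilon > (N_c-1)\epsilon - \epsilon$ makes strict positivity automatic). Since $T \ge \bar{T}$, this gives $\textbf{E}\,T = \infty$.

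For part~(2), when $\delta_1 < \delta_2$ the noise has strictly positive mean $\mu := (\delta_2 - \delta_1)/2$, and my plan is a phase-wise optional-stopping argument. Set $\tau_0 := 0$, and for $1 \le k \le N_c - 1$ let $\tau_k$ be the first time the cluster containing agent~1 has absorbed $\mathcal{V}_1 \cup \cdots \cup \mathcal{V}_{k+1}$ and is in $\delta_2$-consensus; each $\tau_k$ is a.s.\ finite by Theorem~\ref{quasiconthm}. On the interval $(\tau_{k-1}, \tau_k]$ the merged cluster $\mathcal{C}_k := \mathcal{V}_1 \cup \cdots \cup \mathcal{V}_k$ stays in $\delta_2$-consensus and remains separated from the other clusters by more than $\epsilon$, so the same computation as in the proof of Theorem~\ref{quasiconthm} yields $\bar{c}(t+1) = \bar{c}(t) + \xi(t+1)/|\mathcal{C}_k|$ for the cluster mean $\bar{c}(t)$, making $\bar{c}(t) - (t-\tau_{k-1})\mu/|\mathcal{C}_k|$ a martingale with bounded increments. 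Optional stopping (justified by truncation $\tau_k \wedge N$, bounded increments, and monotone convergence) yields
\begin{equation*}
\textbf{E}\bigl[\tau_k - \tau_{k-1}\bigr] \;=\; \frac{|\mathcal{C}_k|}{\mu}\,\textbf{E}\bigl[\bar{c}(\tau_k) - \bar{c}(\tau_{k-1})\bigr] \;\le\; \frac{n}{\mu}\,\textbf{E}\bigl[\bar{c}(\tau_k) - \bar{c}(\tau_{k-1})\bigr].
\end{equation*}
Because every absorption mixes in agents lying strictly above the current $\bar{c}$, the quantity $\bar{c}$ is non-decreasing across mergers, so the sum $\sum_{k=1}^{N_c-1}(\bar{c}(\tau_k) - \bar{c}(\tau_{k-1}))$ telescopes and is bounded by $x_{N_c}^* - x_1^* + (N_c-1)\delta_2$, with the $(N_c-1)\delta_2$ term absorbing the at-most-$\delta_2$ single-step overshoot of $\bar{c}$ past the merger threshold at each of the $N_c-1$ absorption events. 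Combining the per-phase bound with $T \le \tau_{N_c-1} + O(1)$ delivers the stated estimate $\textbf{E}\,T \le 2n(x_{N_c}^* - x_1^* + (N_c-1)\delta_2)/(\delta_2 - \delta_1)$.

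The main obstacle I anticipate is the careful bookkeeping around each merger transition. Between $\tau_{k-1}$ and the instant agent~1 first touches $\mathcal{V}_{k+1}$ the cluster is a clean $\delta_2$-consensus group and the identity $\bar{c}(t+1) - \bar{c}(t) = \xi(t+1)/|\mathcal{C}_k|$ holds exactly, but at the single absorption step this identity breaks because the neighborhood topology becomes asymmetric (agent~1 already sees $\mathcal{V}_{k+1}$ while the rest of $\mathcal{C}_k$ does not, exactly the situation handled by equations~(\ref{nextmerge2})--(\ref{nextmerge})). Quantifying the resulting at-most-$\delta_2$ single-step jump of $\bar{c}$, verifying that the post-absorption averaging and brief equilibration controlled by Lemma~\ref{robconspeci} cannot cause a net decrease of $\bar{c}$ (so the telescoping bound stays valid), and checking the integrability conditions for optional stopping at each $\tau_k$ are the delicate steps that together produce the precise $(N_c-1)\delta_2$ correction in the final bound.
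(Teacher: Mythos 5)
Your plan for part~(2) is essentially the paper's own argument in martingale clothing: a per-merger application of Wald's identity (the paper compares the first-passage time of the boosted sum $R_t$ with that of the plain walk $U_t$ and then applies Lemma~\ref{waldequ}), with the overshoot at each of the $N_c-1$ absorptions contributing the $(N_c-1)\delta_2$ term, the cluster size bounded by $n$, and the displacements telescoping to $x_{N_c}^*-x_1^*$; the extra upward jump of the cluster mean at an absorption step only shortens the phase, so your inequality points the right way, and the truncation/monotone-convergence justification of optional stopping is standard. So part~(2) is fine and not genuinely different from the paper.

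Part~(1), however, has a real gap exactly where the paper needed a new lemma. From (\ref{ytevolu}) the stopping rule (\ref{defstopt}) reads $m(\bar T-1)+\xi(\bar T)\ge c$ with $c=x_2^*-x_1^*-\epsilon$, while the martingale you stop is $m(\bar T)=m(\bar T-1)+\xi(\bar T)/|\mathcal{V}_1|$; the current noise enters the stopping condition with full weight but the martingale only with weight $1/|\mathcal{V}_1|$. Hence your a.s.\ lower bound is only $c-\delta_2(|\mathcal{V}_1|-1)/|\mathcal{V}_1|$, and since (\ref{divisivesys}) merely requires $c>0$, this quantity can be negative whenever $|\mathcal{V}_1|\ge 2$ and $c<\delta_2(1-1/|\mathcal{V}_1|)$ (e.g.\ $|\mathcal{V}_1|=2$, $m(\bar T-1)=-0.5\delta_2$, $\xi(\bar T)=0.6\delta_2$, $c=0.01\delta_2$ gives $m(\bar T)<0$), so $\textbf{E}[m(\bar T)]=0$ yields no contradiction. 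The proposed fallback to a later merger does not repair this: for $N_c=2$ there is no later merger and your bound $(N_c-1)\epsilon-\epsilon$ degenerates to $0$, and for $N_c\ge 3$ the cluster mean receives a free upward jump of order $\epsilon$ at each absorption (cf.\ (\ref{nextmerge2})--(\ref{nextmerge})), so at the final time the noise-driven martingale part need only exceed $\sum_j(x_{j+1}^*-x_j^*-\epsilon)$ minus overshoot corrections, which can again be negative. The correct mechanism for $\textbf{E}\,\bar T=\infty$ is not strict positivity of the threshold but the null-recurrence-type fact that a zero-mean walk's first passage above level $0$ (indeed above a slightly negative level) already has infinite expectation; this is precisely what the paper's Lemma~\ref{stoptimejud} gives for $|\mathcal{V}_1|=1$ (where your argument does work, since then $m(\bar T)\ge c>0$ a.s.) and what the ad hoc Lemma~\ref{stoptimejudalp} establishes for the boosted-last-increment process $Q_t=\sum_1^{t-1}X_i+\alpha X_t$, $\alpha=|\mathcal{V}_1|>1$. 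Without an ingredient of that type, your proof of part~(1) only covers the case $|\mathcal{V}_1|=1$ or $c$ large relative to $\delta_2$, not the general statement.
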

\begin{rem}
Usually, we say a finite stopping time $T$ is integrable if $\textbf{E}\,T<\infty$, then Theorem \ref{stoptimethm} implies that the finite stopping time when the opinion differences are eliminated is integrable only when the noise is oriented. Also, it can be seen from the coefficient $\frac{2n}{\delta_2-\delta_1}$ that given $0<\delta_2\leq \epsilon/2n$, a tiny increase of $\delta_1$ could dramatically decrease the mean of the stopping time, which will be illustrated by the simulations in the next section.
\end{rem}
To prove Theorem \ref{stoptimethm}, some preliminary lemmas are needed.
\begin{lem}\label{stoptimejud}
 Suppose $\{X_t,t\geq 1\}$ are i.i.d. random variables with $\textbf{E}\,X_1=\mu\geq 0, \textbf{E}\,|X_1|>0$, and let $S_t=\sum_1^tX_i, T_0=\inf\{t\geq 1: S_t\geq 0\}, T_c=\inf\{t\geq 1: S_t>c>0\}$, then $\textbf{E}\,T_c\geq\textbf{E}\,T_0=\infty$ for $\mu=0$ and $\textbf{E}\,T_c<\infty$ for $\mu>0$.
\end{lem}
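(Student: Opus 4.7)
The plan is to treat the two cases separately, with Wald's identity (equivalently, optional stopping applied to the centered walk $M_t := S_t - \mu t$) as the main tool in both. Before splitting, I would record the deterministic inequality $T_c \ge T_0$: since $c > 0$, whenever $S_t > c$ one also has $S_t \ge 0$, so the first strict passage above $c$ cannot precede the first non-negative value. This reduces the first assertion to $\mathbf{E}\,T_0 = \infty$.

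For the null-drift case $\mu = 0$, I would argue by contradiction. Assume $\mathbf{E}\,T_0 < \infty$. Since $X_1$ is integrable and $T_0$ is a stopping time of finite mean, Wald's identity applies and gives $\mathbf{E}\,S_{T_0} = \mu\,\mathbf{E}\,T_0 = 0$. But $S_{T_0} \ge 0$ by definition, so we must have $S_{T_0} = 0$ almost surely. Now the hypotheses $\mathbf{E}\,X_1 = 0$ together with $\mathbf{E}|X_1| > 0$ exclude $X_1 \equiv 0$ and force $P(X_1 > 0) > 0$; on the event $\{X_1 > 0\}$ one has $T_0 = 1$ and $S_{T_0} = X_1 > 0$, contradicting $S_{T_0} = 0$ a.s. Hence $\mathbf{E}\,T_0 = \infty$, and combined with $T_c \ge T_0$ the first claim follows.

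For the positive-drift case $\mu > 0$, the strong law gives $S_t / t \to \mu > 0$ a.s., so $S_t \to +\infty$ and therefore $T_c < \infty$ a.s. To bound the expectation I would apply optional stopping to $M_t = S_t - \mu t$ at the bounded stopping time $T_c \wedge N$, obtaining $\mu\,\mathbf{E}(T_c \wedge N) = \mathbf{E}\,S_{T_c \wedge N}$. On $\{T_c \le N\}$, the definition of $T_c$ gives $S_{T_c - 1} \le c$, hence $S_{T_c \wedge N} = S_{T_c} \le c + X_{T_c}^+$; on $\{T_c > N\}$, $S_{T_c \wedge N} = S_N \le c$. In the paper's application the $X_t$ are bounded above by $\delta_2$, so $S_{T_c \wedge N} \le c + \delta_2$ uniformly in $N$, which yields $\mathbf{E}(T_c \wedge N) \le (c + \delta_2)/\mu$; monotone convergence in $N$ then gives $\mathbf{E}\,T_c \le (c + \delta_2)/\mu < \infty$. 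Notice $1/\mu = 2/(\delta_2 - \delta_1)$, which matches the prefactor appearing in Theorem \ref{stoptimethm}.

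The main obstacle is the logical structure of the zero-drift case: Wald's identity needs $\mathbf{E}\,T_0 < \infty$ to be invoked, yet this is exactly the statement we want to refute, so the argument must be framed as an assume-and-derive-contradiction and one must be sure that Wald's identity is available under that temporary assumption (for integrable i.i.d.\ increments and an integrable stopping time, it is). A secondary subtlety is that the lemma as stated does not assume bounded increments; the bound $S_{T_c \wedge N} \le c + \delta_2$ I use relies on boundedness from above, which is harmless for the paper's noise model but in full generality would require a renewal-theoretic treatment via ascending ladder epochs (whose mean is finite precisely when $\mu > 0$), followed by Wald applied to the ladder height sequence.
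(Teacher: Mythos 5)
Your proposal is correct in substance but takes a genuinely different route from the paper. The paper's own proof is essentially a citation: it invokes Exercise 5.4.11 of \cite{Chow1997} for the behaviour of the first (strict) ascending ladder epoch ($\textbf{E}\,T_c=\infty$ when $\mu=0$, $\textbf{E}\,T_0^+<\infty$ when $\mu>0$) and Theorem 5.4.1 of \cite{Chow1997} to transfer these facts between $T_0$ and $T_c$. You instead give a self-contained argument: for $\mu=0$ the assume-$\textbf{E}\,T_0<\infty$-and-apply-Wald contradiction is valid (Wald needs exactly integrable i.i.d.\ increments and an integrable stopping time, both available under the temporary assumption; $S_{T_0}\geq 0$ with $\textbf{E}\,S_{T_0}=0$ forces $S_{T_0}=0$ a.s., impossible since $P\{X_1>0\}>0$ when $\mu=0$ and $\textbf{E}\,|X_1|>0$), and the pointwise inequality $T_c\geq T_0$ finishes that case; this is arguably more transparent than the paper's citation. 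For $\mu>0$, your optional-stopping argument at $T_c\wedge N$ with overshoot control even produces the quantitative bound $\textbf{E}\,T_c\leq (c+\delta_2)/\mu$, which the paper only extracts later, in the proof of Theorem \ref{stoptimethm}, by combining the lemma with Wald's equation (\ref{UTvalue})--(\ref{expectT1}).

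The one caveat concerns generality: the lemma as stated makes no boundedness assumption, and your bound $S_{T_c\wedge N}\leq c+\delta_2$ uses increments bounded above by $\delta_2$. For general integrable $X_1$ with $\mu>0$ the overshoot $X_{T_c}^+$ is not controlled by your argument, and the finiteness of $\textbf{E}\,T_c$ then requires the ladder-epoch/renewal argument you mention (finite mean of the first strict ladder epoch when $\mu>0$, plus a Wald-type decomposition of $T_c$ into copies of it) --- which is precisely the content the paper outsources to Exercise 5.4.11 and Theorem 5.4.1 of \cite{Chow1997}. You correctly flag this and name the right fix but do not carry it out, so as a proof of the lemma verbatim your second case is a sketch; as a proof of what the paper actually needs (noise uniform on $[-\delta_1,\delta_2]$, hence bounded increments, both here and in Lemma \ref{stoptimejudalp} and Theorem \ref{stoptimethm}) it is complete and slightly sharper.
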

\begin{proof}
Consider Exercise 5.4.11 of \cite{Chow1997} (Ex.11), then when $\mu=0$, we have $\textbf{E}\,T_c=\infty$, whence $\textbf{E}\,T_0=\infty$ by Theorem 5.4.1 of \cite{Chow1997}. Apropos of the case of $\mu>0$, denote $T_0^+=\inf\{t\geq 1: S_t> 0\}$, then $\textbf{E}\,T_0^+<\infty$ by Ex.11, and hence $\textbf{E}\,T_0\leq \textbf{E}\,T_0^+<\infty$, which implies $\textbf{E}\,T_c<\infty$ by Theorem 5.4.1 of \cite{Chow1997}.
\end{proof}
\begin{lem}\label{stoptimejudalp}
Suppose $\{X_t,t\geq 1\}$ are i.i.d. random variables uniformly distributed on $[-\delta,\delta]$ with $\delta>0$, and denote $Q_1=X_1, Q_t=\sum_1^{t-1}X_i+\alpha X_t, t\geq 2$ with $\alpha> 1$. Let $ T_0^{'}=\inf\{t\geq 1: Q_t\geq 0\}$, then $\textbf{E}\,T_0^{'}=\infty$.
\end{lem}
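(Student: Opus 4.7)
The plan is to reduce the statement to Lemma \ref{stoptimejud} via the algebraic identity
\begin{equation*}
Q_t \;=\; S_t + (\alpha-1)X_t \quad (t\ge 2), \qquad S_t := X_1 + \cdots + X_t,
\end{equation*}
which writes $Q_t$ as the symmetric random walk $S_t$ plus a bounded perturbation of absolute value at most $(\alpha-1)\delta$. The immediate consequence is the inclusion
\begin{equation*}
\{S_t < -(\alpha-1)\delta\} \;\subseteq\; \{Q_t < 0\}, \qquad t\ge 2,
\end{equation*}
so that for $t\ge 2$, $Q_t \ge 0$ forces $S_t \ge -(\alpha-1)\delta$. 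Hence it is enough to show that, with positive probability, the walk $S$ is trapped below the negative level $-(\alpha-1)\delta$ for an expected infinite amount of time; this is exactly the regime that Lemma \ref{stoptimejud} controls.

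The obstacle is the start-up: since $|S_t|\le t\delta$, the event $\{S_t< -(\alpha-1)\delta\}$ is outright impossible for $t<\alpha-1$, so the inclusion gives no information in the initial window (and $Q_1=X_1$ must be handled by hand). I address this with a deterministic ``burn-in.'' Fix an integer $k_0 > 2(\alpha-1)$ and set
\begin{equation*}
A \;=\; \bigcap_{t=1}^{k_0} \bigl\{X_t\in[-\delta,-\delta/2]\bigr\}, \qquad P(A) \;=\; (1/4)^{k_0} > 0.
\end{equation*}
On $A$, direct inspection gives $Q_1=X_1<0$ and $Q_t = S_{t-1}+\alpha X_t \le -(t-1)\delta/2 - \alpha\delta/2 < 0$ for $2\le t\le k_0$; moreover $S_{k_0}\le -k_0\delta/2 < -(\alpha-1)\delta$, which secures a uniform safety margin of size $M_0 := k_0\delta/2 - (\alpha-1)\delta > 0$.

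For the tail beyond the burn-in, introduce the independent walk $S'_{t'} := X_{k_0+1}+\cdots+X_{k_0+t'}$ and its first-passage time $\tau' := \inf\{t'\ge 1 : S'_{t'} \ge M_0\}$. Whenever $t'<\tau'$ one has $S'_{t'}<M_0$, hence $S_{k_0+t'} = S_{k_0}+S'_{t'} < -(\alpha-1)\delta$, and so $Q_{k_0+t'}<0$ by the inclusion. Combined with the burn-in this proves $T_0' \ge k_0 + \tau'$ on $A$. Since $A\in\sigma(X_1,\ldots,X_{k_0})$ while $\tau'\in\sigma(X_{k_0+1},X_{k_0+2},\ldots)$, the two are independent, and applying Lemma \ref{stoptimejud} to $S'$ (which satisfies $\textbf{E}\,X_1=0$ and $\textbf{E}\,|X_1|=\delta/2>0$) with threshold $c=M_0>0$ yields $\textbf{E}\,\tau' = \infty$. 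Therefore
\begin{equation*}
\textbf{E}\,T_0' \;\ge\; \textbf{E}\,[(k_0+\tau')I_A] \;=\; P(A)\bigl(k_0+\textbf{E}\,\tau'\bigr) \;=\; \infty,
\end{equation*}
as required. The only real subtlety I anticipate is the burn-in bookkeeping itself: because the extra term $(\alpha-1)X_t$ can tip $Q_t$ above $0$ even when $S_t$ is only mildly negative, the fixed positive margin $M_0$ must be paid for up front before the symmetric-walk hitting estimate from Lemma \ref{stoptimejud} can be invoked on the tail.
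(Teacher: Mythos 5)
Your proof is correct, but it takes a genuinely different route from the paper's. Both arguments hinge on the same observation, namely $Q_t=S_t+(\alpha-1)X_t$ and hence $\{S_t<(1-\alpha)\delta\}\subseteq\{Q_t<0\}$; the difference lies in how one shows that the walk spends an infinite expected amount of time below the level $(1-\alpha)\delta$. The paper works with the original walk and no conditioning: it writes $\textbf{E}\,T_b=\sum_t P\{\max_{1\le j\le t-1}S_j<b\}$, splits off the contribution where the running maximum lies in $[(1-\alpha)\delta,b)$, identifies that contribution with the two-sided exit time $T_b^{-}$ of the strip (finite by Lemma \ref{ex548}, i.e.\ Exercise 5.4.8 of Chow--Teicher), and concludes $\sum_t P\{\max_{1\le j\le t-1}S_j<(1-\alpha)\delta\}=\infty$ by subtracting from $\textbf{E}\,T_b=\infty$. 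You instead purchase the needed margin with an explicit burn-in event $A$ of probability $(1/4)^{k_0}$ that keeps $Q_t<0$ for $t\le k_0$ and drives $S_{k_0}$ a fixed amount $M_0>0$ below $(1-\alpha)\delta$, and then invoke Lemma \ref{stoptimejud} for the fresh, independent walk $S'$; since $M_0>0$, your $\tau'$ dominates the lemma's $T_0$ pointwise, so $\textbf{E}\,\tau'=\infty$ and $\textbf{E}\,T_0'\ge P(A)\,(k_0+\textbf{E}\,\tau')=\infty$ by independence. What each approach buys: the paper's is shorter on the page but needs the additional two-sided exit lemma and the identification of the running-maximum band event with the pathwise strip event $\{T_b^{-}\ge t\}$, a step that requires care since a path may dip below $(1-\alpha)\delta$ while its running maximum still lies in the band; your version uses only Lemma \ref{stoptimejud} plus an elementary independence argument, treats the start-up window $t\le k_0$ (where the inclusion is vacuous because $|S_t|\le t\delta$) explicitly, and in fact yields the slightly stronger conclusion $\textbf{E}\,[T_0'I_A]=\infty$ for an explicit positive-probability event measurable with respect to $X_1,\ldots,X_{k_0}$.
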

It is easy to check that $T_0^{'}$ is shorter than $T_0$ defined in Lemma \ref{stoptimejud}, however, Lemma \ref{stoptimejudalp} shows that $T_0^{'}$ is still not integrable. The proof of Lemma \ref{stoptimejudalp} is given in Appendix.
\begin{lem}\label{waldequ}\cite{Chow1997}
(Wald's Equation). Let $\{X_t,t\geq 1\}$ are i.i.d. random variables, and $S_t=\sum_1^tX_i, t\geq 1$. If $\textbf{E}\,X_1$ exists and $T$ is an $\{X_n\}-$stopping time with $\textbf{E}\,T<\infty$, then
\begin{equation*}
  \textbf{E}\,S_T=\textbf{E}\,X_1\cdot\textbf{E}\,T.
\end{equation*}
\end{lem}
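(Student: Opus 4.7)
The proof naturally splits according to the sign of the noise bias $\delta_2-\delta_1$, so the two statements are attacked by very different devices.

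\emph{Part 1 ($\delta_1=\delta_2$).} The plan is to reduce to Lemma~\ref{stoptimejudalp}. Before the first merger, cluster $\mathcal{V}_1$ evolves in isolation; because only agent~$1$ receives noise while the other members of $\mathcal{V}_1$ passively average, unwinding the one-step recursion within $\mathcal{V}_1$ yields the closed form
\[
|\mathcal{V}_1|\bigl(x_1(t)-x_1^*\bigr) \;=\; \sum_{s=1}^{t-1}\xi(s) + |\mathcal{V}_1|\,\xi(t),
\]
which is exactly the process $\{Q_t\}$ of Lemma~\ref{stoptimejudalp} with $\alpha=|\mathcal{V}_1|$. The merger of $\mathcal{V}_1$ with $\mathcal{V}_2$ cannot occur until $x_1(t)\ge x_2^*-\epsilon$, i.e., until $Q_t$ crosses the strictly positive level $|\mathcal{V}_1|(x_2^*-x_1^*-\epsilon)$ (strict positivity comes from (\ref{divisivesys})). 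The associated first-passage time therefore dominates $T_0'$ of Lemma~\ref{stoptimejudalp}, and $T$ in turn dominates this merger time because $d_{\mathcal{V}}(t)>\epsilon$ for every $t$ preceding the first merger. With $\delta_1=\delta_2$ the noise is mean-zero, so Lemma~\ref{stoptimejudalp} gives $\textbf{E}\,T_0'=\infty$, whence $\textbf{E}\,T=\infty$.

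\emph{Part 2 ($\delta_1<\delta_2$).} The plan is to apply Wald's equation (Lemma~\ref{waldequ}) phase by phase. Set $\tau_0:=0$, let $\tau_g$ denote the $g$-th merger time, $M_g:=\bigcup_{h=1}^g\mathcal{V}_h$, and $T_g:=\tau_g-\tau_{g-1}$. Because averaging within any neighbor cluster preserves the corresponding sub-sum and only agent~$1\in M_g$ receives additive noise, the restricted sum $S_{M_g}(t):=\sum_{i\in M_g}x_i(t)$ satisfies the clean one-line recursion
\[
S_{M_g}(t+1)-S_{M_g}(t)\;=\;\xi(t+1),\qquad \tau_{g-1}\le t<\tau_g.
\]
A truncation at $T_g\wedge N$ (to legitimately invoke Lemma~\ref{waldequ}) followed by monotone convergence $N\to\infty$ gives
\[
\tfrac{\delta_2-\delta_1}{2}\,\textbf{E}\,T_g \;=\; \textbf{E}\bigl[S_{M_g}(\tau_g)-S_{M_g}(\tau_{g-1})\bigr].
\]
What remains is a deterministic per-phase sum-increment bound. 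Using the one-step HK dynamics already spelled out in the proof of Theorem~\ref{quasiconthm}, the merger condition $x_1(\tau_g-1)\ge x_{g+1}^*-\epsilon$ forces $\bar x_{M_g}(\tau_g)\le x_{g+1}^*+O(\delta_2)$, while the post-merger geometry at phase-start gives $\bar x_{M_g}(\tau_{g-1})\ge x_g^*-O(\delta_2)$. Consequently
\[
S_{M_g}(\tau_g)-S_{M_g}(\tau_{g-1}) \;\le\; |M_g|\bigl(x_{g+1}^*-x_g^*+\delta_2\bigr) \;\le\; n\bigl(x_{g+1}^*-x_g^*+\delta_2\bigr).
\]
Summing over $g=1,\dots,N_c-1$ telescopes the differences $x_{g+1}^*-x_g^*$ to $x_{N_c}^*-x_1^*$ and collects the $(N_c-1)\delta_2$ overhead, producing
\[
\textbf{E}\,T \;=\; \sum_{g=1}^{N_c-1}\textbf{E}\,T_g \;\le\; \frac{2n}{\delta_2-\delta_1}\bigl(x_{N_c}^*-x_1^*+(N_c-1)\delta_2\bigr).
\]

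\emph{Main obstacle.} Part~1 is essentially immediate once the closed form for $Q_t$ is spotted. The crux of Part~2 is the per-phase deterministic sum-increment estimate: it rests on careful pre- and post-merger accounting for opinions inside $M_g$ in terms of the original cluster locations $\{x_h^*\}$ and one-step noise, reusing (and lightly refining) the identities already developed in the proof of Theorem~\ref{quasiconthm}. The truncation step used to justify Wald's equation is routine by comparison.
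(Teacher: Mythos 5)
Your proposal does not prove the statement it is attached to. The statement is Lemma~\ref{waldequ}, Wald's Equation: for i.i.d.\ $\{X_t\}$ with $\textbf{E}\,X_1$ finite and an $\{X_n\}$-stopping time $T$ with $\textbf{E}\,T<\infty$, one has $\textbf{E}\,S_T=\textbf{E}\,X_1\cdot\textbf{E}\,T$. What you have written is instead a proof sketch of Theorem~\ref{stoptimethm} (the integrability dichotomy for the consensus stopping time and the explicit bound $\frac{2n}{\delta_2-\delta_1}(x_{N_c}^*-x_1^*+(N_c-1)\delta_2)$). Indeed your argument \emph{invokes} Lemma~\ref{waldequ} as a tool in Part~2, so it cannot serve as a proof of that lemma without circularity. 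The paper itself offers no proof of Lemma~\ref{waldequ}: it is a classical result quoted directly from \cite{Chow1997}.

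If you intend to actually prove Wald's Equation, the standard route is to write $S_T=\sum_{t=1}^{\infty}X_t\,I_{\{T\geq t\}}$, observe that $\{T\geq t\}=\{T\leq t-1\}^c$ is measurable with respect to $\sigma(X_1,\dots,X_{t-1})$ and hence independent of $X_t$, so that $\textbf{E}\bigl[X_t I_{\{T\geq t\}}\bigr]=\textbf{E}\,X_1\cdot P\{T\geq t\}$, and then justify the interchange of expectation and summation via $\sum_{t\geq 1}\textbf{E}\bigl[|X_t|I_{\{T\geq t\}}\bigr]=\textbf{E}\,|X_1|\cdot\textbf{E}\,T<\infty$; summing $P\{T\geq t\}$ over $t$ gives $\textbf{E}\,T$ and the identity follows. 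None of the HK-specific machinery (clusters, merger times, the noise bias) is relevant to this lemma. As a separate remark, your sketch of Theorem~\ref{stoptimethm} is broadly aligned in spirit with the paper's actual proof of that theorem (reduction to Lemma~\ref{stoptimejudalp} in the neutral case, Wald's equation plus a per-phase increment bound in the oriented case), but it belongs under that theorem, not here.
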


\noindent\emph{Proof of Theorem \ref{stoptimethm}.}
Considering the stopping time $\bar{T}$ defined in (\ref{defstopt}), it is known from the proof of Theorem \ref{robconsen} that at time $\bar{T}+1$, the clusters $\mathcal{V}_1$ and $\mathcal{V}_2$ achieve $\delta_2$-consensus. Hence, first we need to calculate the stopping time $\bar{T}$. By (\ref{defstopt}) and (\ref{tfmerge}), we can know that at time $\bar{T}$, only agent 1 of group $\mathcal{V}_1$ becomes the neighbor of the agents in group $\mathcal{V}_2$. Then for $0< t\leq\bar{T}$,
\begin{equation}\label{ytevolu}
\begin{split}
  x_1(t) =&\frac{1}{|\mathcal{V}_1|}\sum\limits_{j\in \mathcal{V}_1}x_j(t-1)+\xi(t)\\
  = &\frac{1}{|\mathcal{V}_1|}\bigg(\sum\limits_{j\in \mathcal{V}_1}\frac{1}{|\mathcal{V}_1|}\sum\limits_{k\in \mathcal{V}_1}x_k(t-2)+\xi(t-1)\bigg)+\xi(t)\\
  =& \frac{1}{|\mathcal{V}_1|}\sum\limits_{j\in \mathcal{V}_1}x_j(t-2)+\frac{1}{|\mathcal{V}_1|}\xi(t-1)+\xi(t) \\
    =&\frac{1}{|\mathcal{V}_1|}\sum\limits_{j\in \mathcal{V}_1}x_j(0)+\sum\limits_{k=1}^{t-1}\frac{1}{|\mathcal{V}_1|}\xi(k)+\xi(t)\\
=&x_1^*+\frac{1}{|\mathcal{V}_1|}\sum\limits_{k=1}^{t-1}\xi(k)+\xi(t).
\end{split}
\end{equation}
Let $R_1=\xi(1),R_t=\sum\limits_{k=1}^{t-1}\xi(k)+|\mathcal{V}_1|\xi(t),t\geq 2$, then by (\ref{defstopt}), (\ref{tfmerge}) and (\ref{ytevolu}),
\begin{equation}\label{barTredef}
  \bar{T}=\inf\{t>0:R_t\geq |\mathcal{V}_1|(x_2^*-x_1^*-\epsilon)\}.
\end{equation}

(i) $\delta_1=\delta_2$: If $|\mathcal{V}_1|=1$, by (\ref{barTredef}) and Lemma \ref{stoptimejud}, it can be obtained that $\textbf{E}\,\bar{T}=\infty$. Otherwise $|\mathcal{V}_1|>1$, then $\textbf{E}\,\bar{T}=\infty$ can be obtained by (\ref{barTredef}) and Lemma \ref{stoptimejudalp}. Hence
\begin{equation}\label{stoptimesymm}
  \textbf{E}\,T\geq \textbf{E}\,\bar{T}=\infty.
\end{equation}

(ii) $\delta_1<\delta_2$: Denote $T_1=\inf\{t>0:U_t\geq M+(|\mathcal{V}|-1)\delta_2\}$.
Since $\{R_t<M\}\subset\{U_t<M+(|\mathcal{V}|-1)\delta_2\}$, we have
\begin{equation}\label{probleq}
\begin{split}
  P\{\bar{T}\geq t+1\}=& P\{\max\limits_{j\leq t}R_j<M\}\leq P\{\max\limits_{j\leq t}U_j<M+(|\mathcal{V}|-1)\delta_2\}\\
 =& P\{T_1\geq t+1\},\quad t\geq 1.
  \end{split}
\end{equation}
Note that $\textbf{E}\,\xi(1)=\frac{\delta_2-\delta_1}{2}>0$, then by Lemma \ref{stoptimejud}, $\textbf{E}\,T_1<\infty$. Thus by (\ref{probleq})
\begin{equation}\label{expectleq}
\begin{split}
  \textbf{E}\,\bar{T}=&\sum\limits_{t=1}^{\infty}P\{\bar{T}\geq t\}\leq \sum\limits_{t=1}^{\infty}P\{T_1\geq t\}= \textbf{E}\,T_1<\infty.
  \end{split}
\end{equation}
Since $U_{T_1-1}<M+(|\mathcal{V}|-1)\delta_2, \xi(1)\leq \delta_2$, it yields
\begin{equation}\label{UTvalue}
  U_{T_1}\leq M+|\mathcal{V}|\delta_2,\,\,a.s..
\end{equation}
By Lemma \ref{waldequ} and (\ref{UTvalue})
\begin{equation}\label{expectT1}
  \textbf{E}\,T_1=\frac{\textbf{E}\,U_{T_1}}{\textbf{E}\,\xi(1)}\leq\frac{2|\mathcal{V}|(x_2^*-x_1^*-\epsilon+\delta_2)}{\delta_2-\delta_1}.
\end{equation}
Hence (\ref{expectleq}) yields
\begin{equation}\label{expetbarT}
   \textbf{E}\,\bar{T}\leq\frac{2|\mathcal{V}|(x_2^*-x_1^*-\epsilon+\delta_2)}{\delta_2-\delta_1}.
\end{equation}
Define $T^{(1)}=T_1, T^{(j+1)}=\inf\{t\geq1: U_{T_j+t}-U_{T_j}\geq M+(|\mathcal{V}|-1)\delta_2\},\,j\geq 1$ where $T_j=\sum_1^jT^{(i)}$, then $\{T^{(j)},j\geq  1\}$ are the copies of $T_1$.
Since $\bar{T}$ is the time when the clusters $\mathcal{V}_1$ and $\mathcal{V}_2$ get merged, following the above procedure, it obtains the time $T$ when all the clusters achieve $\delta_2$-consensus
\begin{equation}\label{expectT}
\begin{split}
  \textbf{E}\,T\leq& \frac{2|\mathcal{V}|}{\delta_2-\delta_1}\sum\limits_{j=1}^{N_c-1}(x_{j+1}^*-x_j^*-\epsilon+\epsilon+\delta_2)\\
  =&\frac{2n}{\delta_2-\delta_1}(x_{N_c}^*-x_1^*+(N_c-1)\delta_2).
  \end{split}
\end{equation}
This complete the proof. \hfill$\Box$

For the finite stopping time when the opinions are synchronized to the leader's opinion value, it can be defined as $T_l$ by considering (\ref{stopreachleader}) and (\ref{distanVA}). Compared to the stopping time $T$ when all the opinions reach $\delta_2$-consensus, $T_l$ increases by the time amount of reaching to $A$ for the synchronized opinions. Then by (\ref{stoptimesymm}) and (\ref{expectT}), we have
\begin{thm}\label{stoptimetoleaderthm}
Let $T_l=\inf\{t:d_\mathcal{V}^A(t)\leq \epsilon\}$ with $d_\mathcal{V}^A(t)=\max\limits_{i\in\mathcal{V}}|x_i(t)-A|$, then
\begin{enumerate}
  \item $\textbf{E}\,T_l=\infty$, if $\delta_1=\delta_2$;
  \item $\textbf{E}\,T_l\leq \frac{2n}{\delta_2-\delta_1}(A-x_1^*+\epsilon+N_c\delta_2)$, if $\delta_1<\delta_2$.
\end{enumerate}
\end{thm}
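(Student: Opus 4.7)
The plan is to mirror the proof of Theorem \ref{stoptimethm}, treating the closed-minded leader as an additional ``cluster'' at position $A$. I would decompose $T_l = T + T'$, where $T$ is the $\delta_2$-consensus stopping time from Theorem \ref{stoptimethm} and $T'$ is the additional time for the synchronized cluster in $\mathcal{V}$ to rise within $\epsilon$ of $A$. The bound on $\textbf{E}\,T$ is inherited from Theorem \ref{stoptimethm}; the new work is to bound $\textbf{E}\,T'$ and then sum.

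For part (1) ($\delta_1 = \delta_2$), the hypothesis $A > x_{N_c}^* + \epsilon$ ensures that no agent in $\mathcal{V}$ is initially a neighbour of the leader, so the dynamics up to the first merge time $\bar{T}$ of (\ref{defstopt}) is unaffected by the leader's presence, and $\bar{T}$ has the same distribution as in the leaderless case analysed in Theorem \ref{stoptimethm}. At time $\bar{T}+1$, the newly merged $\mathcal{V}_1 \cup \mathcal{V}_2$ cluster sits near $x_2^* < A - \epsilon$, so $T_l > \bar{T}$; combined with $\textbf{E}\,\bar{T} = \infty$ from (\ref{stoptimesymm}), this gives $\textbf{E}\,T_l = \infty$.

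For part (2) ($\delta_1 < \delta_2$), (\ref{expectT}) supplies $\textbf{E}\,T \leq \frac{2n}{\delta_2-\delta_1}(x_{N_c}^*-x_1^*+(N_c-1)\delta_2)$. To bound $\textbf{E}\,T'$ I would restart the analysis at time $T$ using the strong Markov property: the shifted noise sequence $\{\xi(T+s)\}_{s \geq 1}$ is again i.i.d.\ uniform on $[-\delta_1, \delta_2]$, the agent-$1$ recursion (\ref{ytevolu}) with $|\mathcal{V}| = n$ summands continues to hold starting from the common $\delta_2$-synchronized value, and the leader acts as the $(N_c+1)$-th cluster at $A$. Re-running the $U_t$-process and Wald's equation argument that produced (\ref{expectT1}) and (\ref{expetbarT})---with effective distance $A - x_{N_c}^* + \delta_2$ for the pre-merge travel and an extra $\epsilon$ slack reflecting that $d_\mathcal{V}^A \leq \epsilon$ requires all $n$ agents (not just the noisy one) to fall within the leader's neighbourhood---yields $\textbf{E}\,T' \leq \frac{2n}{\delta_2-\delta_1}(A-x_{N_c}^*+\epsilon+\delta_2)$. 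Adding $\textbf{E}\,T + \textbf{E}\,T'$ gives the claimed bound.

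The main obstacle I expect is justifying the reset at time $T$: one must verify that the recursion (\ref{ytevolu}), the stopping-time description (\ref{barTredef}), and the Wald step all transfer cleanly when the ``initial'' cluster of the argument is the post-$T$ synchronized group rather than $\mathcal{V}_1$. This should follow from the independence of the shifted noise sequence from $\mathcal{F}_T$ and the observation that once $d_\mathcal{V} \leq \delta_2$ holds, all $n$ agents contribute equally to the averaging update of agent $1$, producing precisely the same $|\mathcal{V}|$-summand $R_t$ process as in the original merging analysis.
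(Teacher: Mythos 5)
Your proposal is correct and follows essentially the same route as the paper: the paper likewise treats $T_l$ as the $\delta_2$-consensus time plus one extra ``merge leg'' toward the leader, getting part (1) from (\ref{stoptimesymm}) (the leader is inert before $\bar{T}$, so $T_l\geq\bar{T}$) and part (2) by appending the additional travel distance $A-x_{N_c}^*+\epsilon+\delta_2$ to the sum in (\ref{expectT}). In fact your write-up is somewhat more explicit than the paper's one-sentence justification, since you spell out the restart at the random time $T$ via independence of the shifted noise and the reuse of the Wald step.
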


\section{Simulations}\label{simulation}
In this part, we will present some simulation results to illustrate the main theorems. Here, we suppose the agent number is 10, then by Theorem \ref{quasiconthm}, we can get that the system (\ref{neigh})-(\ref{noisestren}) will reach $\delta_2$-consensus in finite time when $0<\delta_2\leq 0.05\epsilon$. And by Theorem \ref{stoptimejud}, we know that the stopping time of reaching $\delta_2$-consensus is not integrable when $0<\delta_1=\delta_2$, and integrable when $\delta_1<\delta_2$. Figure \ref{symmnoisefig} and \ref{biasnoisefig} illustrate that the system (\ref{neigh})-(\ref{noisestren}) achieve $\delta_2$-consensus in finite time when the noises are either neutral or oriented, but the stopping time of reaching $\delta_2$-consensus in the neutral case (Figure \ref{symmnoisefig}) is much longer than that in the oriented case (Figure \ref{biasnoisefig}). Other than that, a large number of simulations show that only a tiny increase of $\delta_1$ will sharply decrease the time quantity when the system reach $\delta_2$-consensus, and this can also be predicted from Theorem \ref{stoptimejud}. Figure \ref{symmleaderfig} and \ref{biasleaderfig} illustrate the results when a leader agent is added to the system, and it can be seen that the opinions are synchronized to the leader's opinion in finite time. Also, the time of gathering to the leader's opinion when the noise is oriented is shown to be much shorter than the neutral noise case even though $\delta_1$ changes very little.
\begin{figure}[htp]
  \centering
  \includegraphics[width=4in]{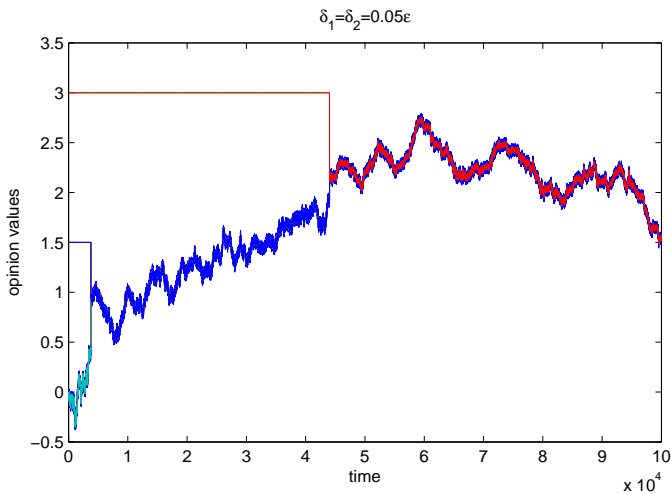}\\
  \caption{Opinion evolution of system (\ref{neigh})-(\ref{noisestren}) of 10 agents with neutral noise uniformly distributed on $[-0.05,0.05]\epsilon$. The initial opinion value $x(0)=[0 ~0 ~0 ~0 ~1.5 ~1.5~ 1.5~ 1.5 ~3 ~3]$, confidence threshold $\epsilon=1$, noise strength $\delta_1=\delta_2=0.05$.  }\label{symmnoisefig}
\end{figure}
\begin{figure}[htp]
  \centering
  \includegraphics[width=4in]{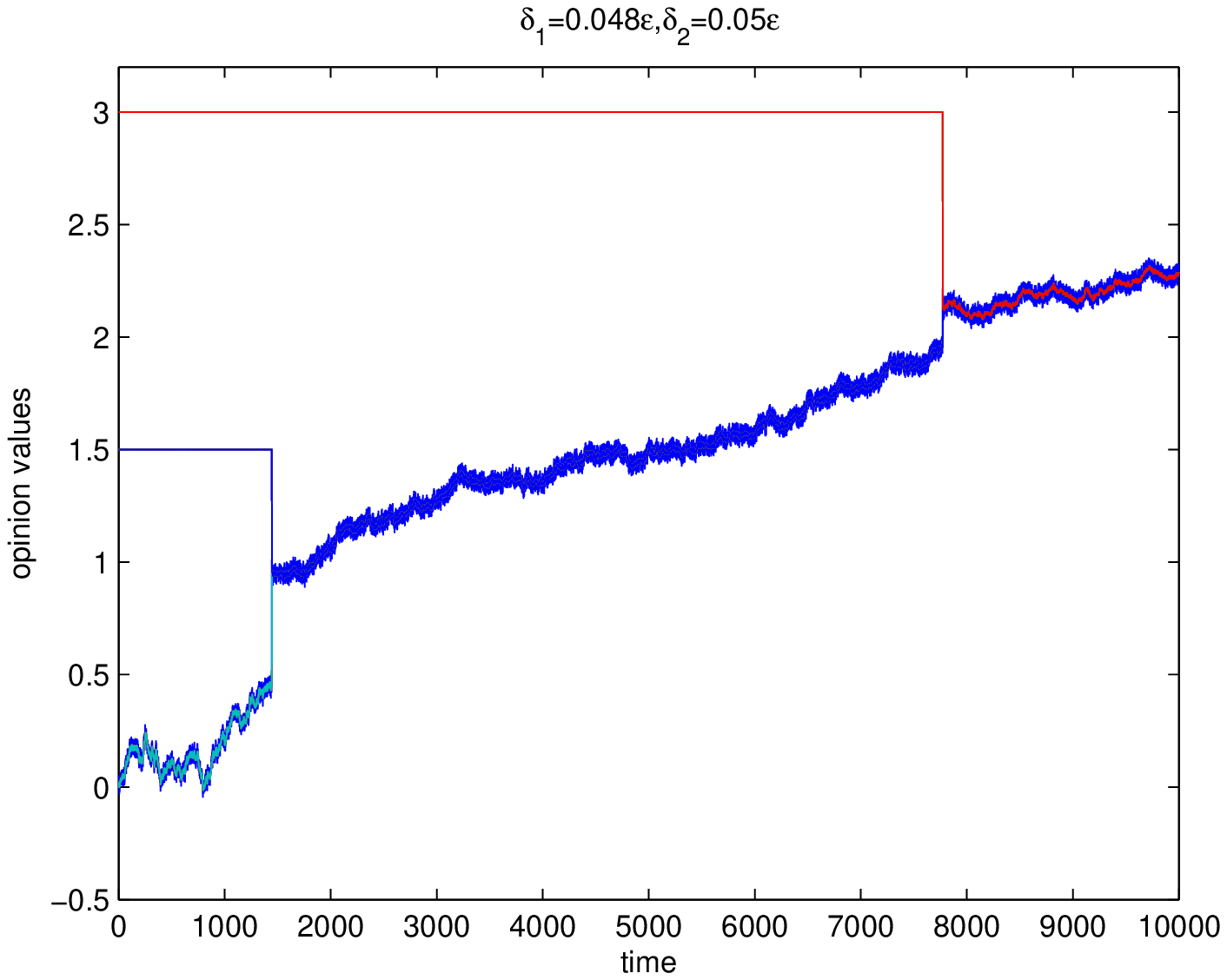}\\
  \caption{Opinion evolution of system (\ref{neigh})-(\ref{noisestren}) of 10 agents with oriented noise uniformly distributed on $[-0.048,0.05]\epsilon$. The initial opinion value $x(0)=[0 ~0 ~0 ~0 ~1.5 ~1.5~ 1.5~ 1.5 ~3 ~3]$, confidence threshold $\epsilon=1$, noise strength $\delta_1=0.048,\delta_2=0.05$.  }\label{biasnoisefig}
\end{figure}
\begin{figure}[htp]
  \centering
  \includegraphics[width=4in]{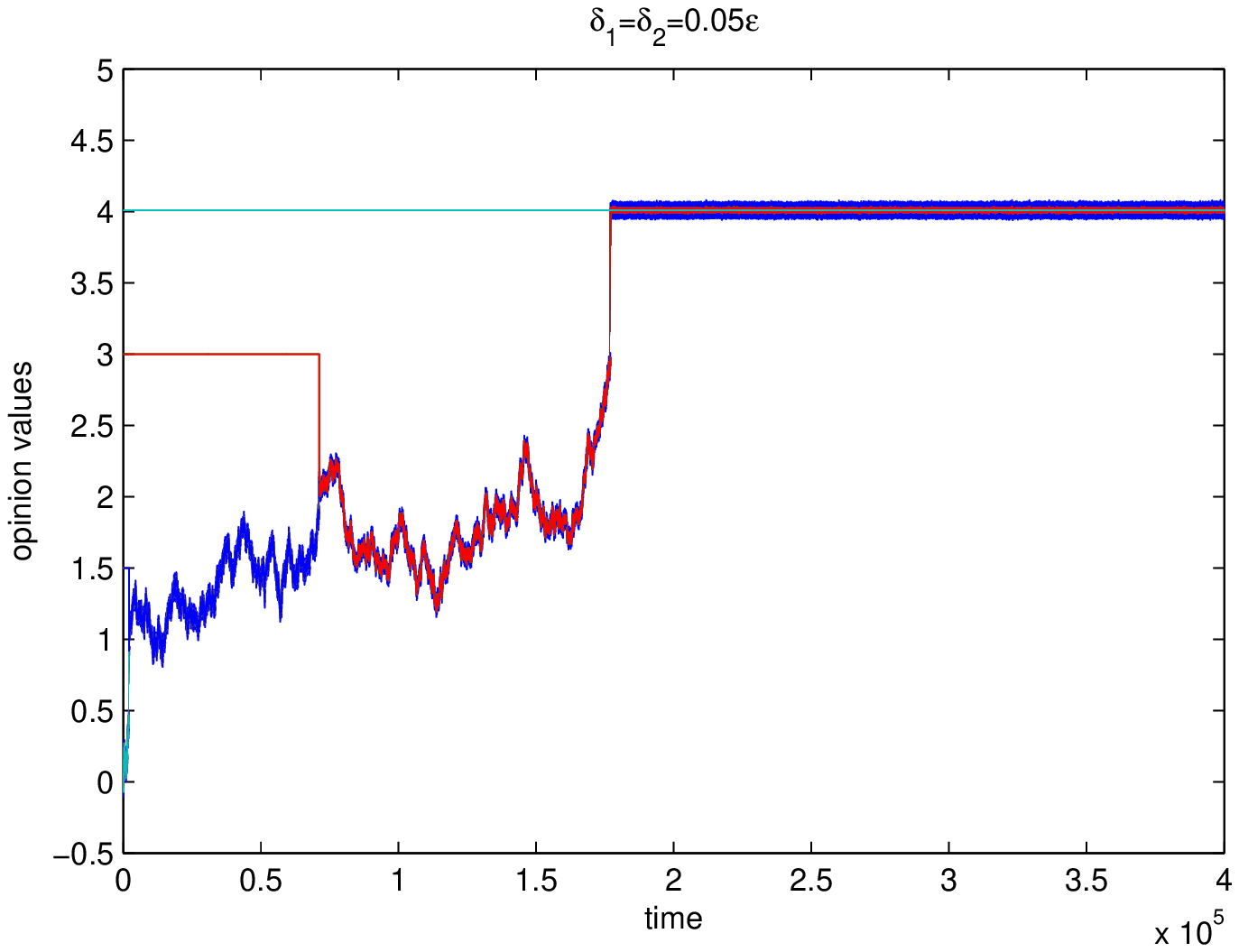}\\
  \caption{Opinion evolution of system (\ref{neigh})-(\ref{noisestren}) of 11 agents with neutral noise uniformly distributed on $[-0.05,0.05]\epsilon$. The initial opinion value $x(0)=[0 ~0 ~0 ~0 ~1.5 ~1.5~ 1.5~ 1.5 ~3 ~3 ~4.01]$, confidence threshold $\epsilon=1$, noise strength $\delta_1=\delta_2=0.05$.  }\label{symmleaderfig}
\end{figure}
\begin{figure}[htp]
  \centering
  \includegraphics[width=4in]{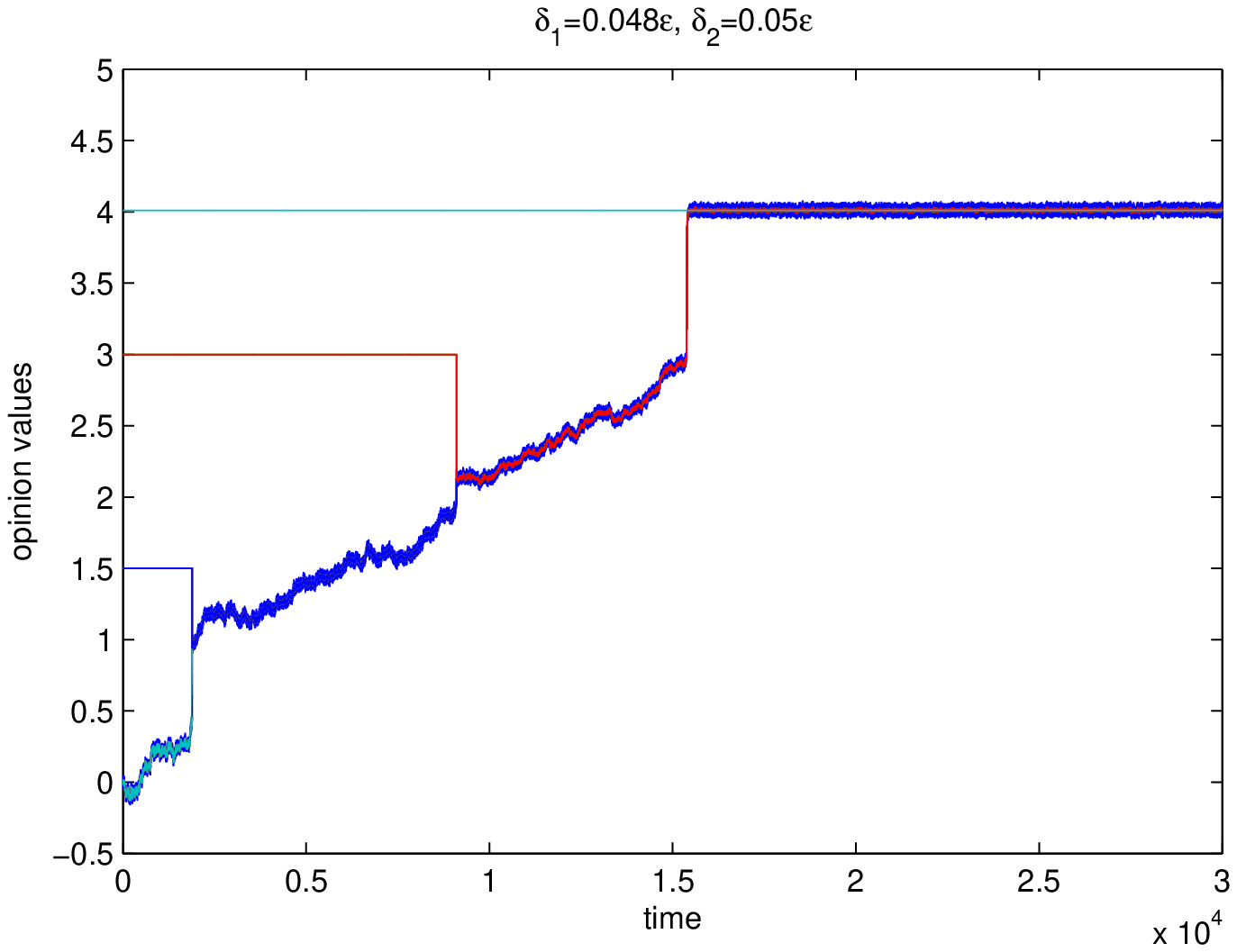}\\
  \caption{Opinion evolution of system (\ref{neigh})-(\ref{noisestren}) of 11 agents with oriented noise uniformly distributed on $[-0.048,0.05]\epsilon$. The initial opinion value $x(0)=[0 ~0 ~0 ~0 ~1.5 ~1.5~ 1.5~ 1.5 ~3 ~3 ~4.01]$, confidence threshold $\epsilon=1$, noise strength $\delta_1=0.048,\delta_2=0.05$.  }\label{biasleaderfig}
\end{figure}
\section{Conclusions}\label{Conclusions}
In this paper, we have proposed a simple intervention strategy to eliminate the disagreement of a divisive system and further induce the opinions synchronized to an objective value, by introducing covert weak random noise to one targeted individual and a leader agent to the system. First we proved that the proposed measure can almost surely synchronize the divisive opinions to the leader's opinion in finite time. Then we calculated the finite stopping time when the noisy opinions achieve $\phi$-consensus. It is interesting to find that the stopping time is not integrable when noise is neutral, and integrable when the  noise is oriented. This fact will provide us with further insights for designing more efficient strategies to eliminate the social disagreement in practice.

\appendix
\section{Proof of Lemma \ref{robconspeci}}
\begin{proof}
 At time $T$, all agents are neighbors to each other, then by (\ref{noisyHKmodel}),
\begin{equation*}
  x_i(T+1)=\frac{1}{n}\sum\limits_{j=1}^{n}x_j(T)+I_{\{i=1\}}\xi(T+1),
\end{equation*}
implying $d_{V}(T+1)=|\xi(T+1)|\leq \delta_2, a.s.$. Repeating the procedure yields the conclusion.
\end{proof}
\section{Proof of Lemma \ref{xvalergo}}
\begin{proof}
  Suppose $T=0$ a.s. without loss of generality, then
 \begin{equation}\label{xtevolu}
\begin{split}
 x_i(t+1)=&\frac{1}{n}\sum\limits_{j=1}^{n}x_j(t)+I_{\{i=1\}}\xi(t+1)\\
=& \frac{1}{n}\sum\limits_{j=1}^{n}\bigg(\frac{1}{n}\sum\limits_{k=1}^{n}x_k(t-1)+\xi(t)\bigg)+I_{\{i=1\}}\xi(t+1)\\
  =& \frac{1}{n}\sum\limits_{j=1}^{n}x_j(t-1)+\frac{1}{n}\xi(t)+I_{\{i=1\}}\xi(t+1) \\
 =&\frac{1}{n}\sum\limits_{j=1}^{n}x_j(0)+\sum\limits_{k=1}^{t}\frac{1}{n}\xi(k)+I_{\{i=1\}}\xi(t+1).
\end{split}
\end{equation}
Let $S_t=\frac{1}{n}\sum\limits_{k=1}^{t-1}\xi(k),t\geq 2$, then by (\ref{xtevolu}),
\begin{equation}\label{rewritext}
  x_i(t+1)=\frac{1}{n}\sum\limits_{j=1}^{n}x_j(0)+S_t+I_{\{i=1\}}\xi_i(t+1).
\end{equation}
Since $\{\xi(t),t\geq 1\}$ are i.i.d. random variables with $\textbf{E}\,\xi(1)=\frac{\delta_2-\delta_1}{2}$, by Law of Large Number, $S_t/t\rightarrow \frac{\delta_2-\delta_1}{2n}$ a.s.. If $\delta_1<\delta_2$, we have $\limsup\limits_{t\rightarrow \infty}S_t\rightarrow \infty$ a.s.. Otherwise if $\delta_1=\delta_2$, it is from Theorem 5.4.3 of \cite{Chow1997} that $\limsup\limits_{t\rightarrow \infty}S_t\rightarrow \infty$ a.s..
Then by (\ref{rewritext}), $\limsup\limits_{t\rightarrow \infty}x_i(t)\rightarrow \infty$ a.s..
\end{proof}
\section{Proof of Lemma \ref{stoptimejudalp}}
The following result is presented as Exercise 5.4.8 in \cite{Chow1997}:
\begin{lem}\label{ex548}
Suppose $S_t=\sum_1^tX_j$, where $\{X_t, t\geq 1\}$ are nondegenerate i.i.d. random variables, and let $T=\inf\{j\geq 1: S_j<-a<0\,\,\text{or}\,\,S_j>b>0\}$, then $\textbf{E}\,T<\infty.$
\end{lem}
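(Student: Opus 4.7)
The plan is to show that $P\{T>n\}$ decays geometrically in $n$, which immediately yields $\textbf{E}\,T=\sum_{j\ge 0}P\{T>j\}<\infty$. The mechanism is standard for exit times of random walks: nondegeneracy of $X_1$ lets the walk $S_j$ produce a net displacement exceeding $a+b$ over some fixed block of $k$ steps with positive probability $\eta$, after which the i.i.d.\ structure of disjoint blocks of length $k$ forces the walk to exit $(-a,b)$ within $nk$ steps with probability at least $1-(1-\eta)^n$.

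First I would extract the constants $k$ and $\eta$ from nondegeneracy. Since $X_1$ is not a.s.\ constant, there exist reals $r<s$ with $p_-:=P\{X_1\le r\}>0$ and $p_+:=P\{X_1\ge s\}>0$. Because $r<s$ we cannot have simultaneously $r\ge 0$ and $s\le 0$, so at least one of $s>0$ or $r<0$ must hold. In the first case choose $k\in\mathbb{N}$ with $ks>a+b$; by independence $P\{S_k>a+b\}\ge P\{X_1\ge s\}^k=p_+^k$. In the second case choose $k$ with $kr<-(a+b)$; then $P\{S_k<-(a+b)\}\ge p_-^k$. Either way, setting $\eta$ equal to the resulting strictly positive lower bound gives $P\{|S_k|>a+b\}\ge \eta>0$.

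Next I would iterate on disjoint blocks of length $k$. If $T>(n+1)k$, then both $S_{nk}$ and $S_{(n+1)k}$ lie in $[-a,b]$, hence $|S_{(n+1)k}-S_{nk}|\le a+b$. The block increment $S_{(n+1)k}-S_{nk}$ has the same distribution as $S_k$ and is independent of $\sigma(X_1,\dots,X_{nk})$, which contains the event $\{T>nk\}$. Therefore
\begin{equation*}
P\{T>(n+1)k\}\le P\{T>nk\}\cdot P\{|S_k|\le a+b\}\le (1-\eta)\,P\{T>nk\},
\end{equation*}
and induction on $n$ gives $P\{T>nk\}\le (1-\eta)^n$. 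Summing the tail in blocks of length $k$,
\begin{equation*}
\textbf{E}\,T=\sum_{j=0}^{\infty}P\{T>j\}\le k\sum_{n=0}^{\infty}P\{T>nk\}\le \frac{k}{\eta}<\infty.
\end{equation*}

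The only delicate point is the very first step: ``nondegenerate'' supplies $r<s$ in the support but does not pin down signs, so one has to verify that at least one end of the interval $[r,s]$ can be amplified by taking $k$ i.i.d.\ copies to overshoot $a+b$. Once that positive-probability ``large jump'' is in hand, the rest is routine independence and tail-sum book-keeping.
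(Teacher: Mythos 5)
Your proof is correct and complete. Note that the paper does not actually prove this lemma: it is quoted as Exercise 5.4.8 of Chow and Teicher and used as a black box, so there is no in-text argument to compare yours against step by step. What you have written is the standard self-contained proof of that exercise: nondegeneracy yields a block length $k$ and an $\eta>0$ with $P\{|S_k|>a+b\}\ge\eta$; the inclusion $\{T>(n+1)k\}\subseteq\{T>nk\}\cap\{|S_{(n+1)k}-S_{nk}|\le a+b\}$ together with independence of the disjoint blocks gives the geometric bound $P\{T>nk\}\le(1-\eta)^n$; and the tail sum closes with $\textbf{E}\,T\le k/\eta$. The one point you flag as delicate --- producing a positive-probability overshoot of $a+b$ from mere nondegeneracy --- is handled correctly: a nondegenerate law admits $r<s$ with both $P\{X_1\le r\}>0$ and $P\{X_1\ge s\}>0$, and since $r<s$ at least one of $r<0$ or $s>0$ must hold, so one of the one-sided products $p_-^k$ or $p_+^k$ serves as $\eta$. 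Your argument has the added benefit of making the lemma self-contained and of exhibiting an explicit exponential tail bound, which is stronger than the bare integrability the paper needs.
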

Now we give the proof of Lemma \ref{stoptimejudalp}:
\begin{proof}
Let $S_0=0, S_t=\sum_i^tX_j, t\geq 1$, and for any $b>0$ denote $T_b=\inf\{t\geq 1: S_t\geq b\}, T_b^{-}=\inf\{t\geq 1: S_t<(1-\alpha)\delta\,\, \text{or}\,\, S_t\geq b\}$, then by Lemmas \ref{stoptimejud} and \ref{ex548}, it has
\begin{equation}\label{a30}
  \textbf{E}\,T_b=\infty,\qquad \textbf{E}\,T_b^{-}<\infty.
\end{equation}
Since
\begin{equation}\label{a31}
\begin{split}
  P\{\max\limits_{1\le j\le t}S_j<b\}=P\{\max\limits_{1\le j\le t}S_j<(1-\alpha)\delta\}+P\{(1-\alpha)\delta\le \max\limits_{1\le j\le t}S_j<b\},
\end{split}
\end{equation}
we have
\begin{equation}\label{a32}
  \begin{split}
  \textbf{E}\,T_b=&\sum\limits_{t=1}^\infty P\{T_b\geq t\}=\sum\limits_{t=1}^\infty P\{\max\limits_{1\le j\le t-1}S_j<b\}\\
  =&\sum\limits_{t=1}^\infty\bigg(P\{\max\limits_{1\le j\le t-1}S_j<(1-\alpha)\delta\}+P\{(1-\alpha)\delta\le \max\limits_{1\le j\le t-1}S_j<b\}\bigg)\\
  =&\sum\limits_{t=1}^\infty P\{\max\limits_{1\le j\le t-1}S_j<(1-\alpha)\delta\}+\sum\limits_{t=1}^\infty P\{T_b^{-}\geq t\}\\
  =&\sum\limits_{t=1}^\infty P\{\max\limits_{1\le j\le t-1}S_j<(1-\alpha)\delta\}+\textbf{E}\,T_b^{-}.
  \end{split}
\end{equation}
By (\ref{a30}), it has
\begin{equation}\label{a33}
  \sum\limits_{t=1}^\infty P\{\max\limits_{1\le j\le t-1}S_j<(1-\alpha)\delta\}=\textbf{E}\,T_b-\textbf{E}\,T_b^{-}=\infty.
\end{equation}
Since $\{\max\limits_{1\le j\le t}S_j<(1-\alpha)\delta\}\subset\{\max\limits_{1\le j\le t}Q_j<0\}$ for $t\geq 1$, (\ref{a33}) yields
\begin{equation}\label{a34}
\begin{split}
  \textbf{E}\,T_0^{'}=&\sum\limits_{t=1}^\infty P\{T_0^{'}\geq t\}=\sum\limits_{t=1}^\infty P\{\max\limits_{1\le j\le t-1}Q_j<0\}\\
  \geq &\sum\limits_{t=1}^\infty P\{\max\limits_{1\le j\le t-1}S_j<(1-\alpha)\delta\}=\infty.
\end{split}
\end{equation}
This completes the proof.
\end{proof}

\end{document}